\tolerance=2000

\documentclass[11pt,article,reqno]{amsart}
\usepackage{amssymb,url}
\usepackage{graphicx}
\usepackage{amsthm}
\usepackage{amsmath}

 \newtheorem{thm}{Theorem}[section]
 \newtheorem{cor}[thm]{Corollary}
 \newtheorem{lem}[thm]{Lemma}
 \newtheorem{prop}[thm]{Proposition}
 \theoremstyle{definition}
 
 \theoremstyle{remark}
 \newtheorem{rem}[thm]{Remark}
 
 \newtheorem{ques}[thm]{Question}
 \numberwithin{equation}{section}

\newcommand{\C}{\mathbb{C}}

\newcommand{\N}{\mathbb{N}}
\newcommand{\Z}{\mathbb{Z}}

\newcommand{\T}{\mathbb{T}}
\newcommand{\re}{\mathrm{Re}}
\newcommand{\Br}{\mathrm{Br}}
\newcommand{\ran}{\mathrm{ran}}

\newcommand{\Chi}{\mathrm{Chi}}
\newcommand{\Par}{\mathrm{par}}
\newcommand{\Des}{\mathrm{Des}}
\newcommand{\Gen}{\mathrm{Gen}}
\newcommand{\Lea}{\mathrm{Lea}}
\newcommand{\roo}{\mathrm{root}}

\newcommand{\irB}{\mathcal{B}}

\newcommand{\irE}{\mathcal{E}}
\newcommand{\irG}{\mathcal{G}}
\newcommand{\irH}{\mathcal{H}}
\newcommand{\irJ}{\mathcal{J}}
\newcommand{\irK}{\mathcal{K}}
\newcommand{\irL}{\mathcal{L}}
\newcommand{\irM}{\mathcal{M}}
\newcommand{\irP}{\mathcal{P}}

\newcommand{\irT}{\mathcal{T}}
\newcommand{\Sl}{S_{\boldsymbol{\lambda}}}

\newcommand{\Ww}{W_{\boldsymbol{w}}}

\begin{document}

\title[As. beh. and cyclic prop. of weighted shifts on dir. trees]{Asymptotic behaviour and cyclic properties of weighted shifts on directed trees}

\author{Gy\"orgy P\'al Geh\'er}

\address{Bolyai Institute, University of Szeged, Aradi v\'ertan\'uk tere 1., H-6720, Szeged, Hungary}
\address{MTA-DE "Lend\"ulet" Functional Analysis Research Group, Institute of Mathematics, University of Debrecen, H-4010 Debrecen, P.O. Box 12, Hungary}

\email{gehergy@math.u-szeged.hu or gehergyuri@gmail.com}

\thanks{
This research was realized in the frames of T\'AMOP 4.2.4. A/2-11-1-2012-0001 ''National Excellence Program - Elaborating and operating an inland student and researcher personal support system''. The project was subsidized by the European Union and co-financed by the European Social Fund.\\
The author was also supported by the "Lend\"ulet" Program (LP2012-46/2012) of the Hungarian Academy of Sciences.}
\subjclass{Primary 47A16, 47B37}

\keywords{Bounded weighted shifts on directed trees, cyclic operator, contraction, asymptotic limit.}

\date{}

\begin{abstract}
In this paper we investigate a new class of bounded operators called weighted shifts on directed trees introduced recently in \cite{tree-shift}. This class is a natural generalization of the so called weighted bilateral, unilateral and backward shift operators. In the first part of the paper we calculate the asymptotic limit and the isometric asymptote of a contractive weighted shift on a directed tree and that of the adjoint. Then we use the asymptotic behaviour and similarity properties in order to obtain cyclicity results. We also show that a weighted backward shift operator is cyclic if and only if there is at most one zero weight.
\end{abstract}

\maketitle

\section{Introduction}

The classes of the so-called weighted bilateral, unilateral or backward shift operators (\cite{Ni, Shields}) are very useful for an operator theorist. 
Besides normal operators these are the next natural classes on which conjectures could be tested. 
Recently Z. J. Jab\l onski, I. B. Jung and J. Stochel defined a natural generalization of these classes in \cite{tree-shift}, called weighted shifts on directed trees. 
Among others, they were interested in hyponormality, co-hyponormality, subnormality etc., and they provided many examples for several unanswered questions.
They continued their research in several papers, see \cite{tree-shift-,tree-shift--,tree-shift_,tree-shift__,Tre}.

In this paper we will study cyclic properties of bounded (mainly contractive) weighted shift operators on directed trees. 
First, we will explore their asymptotic behaviour, and as an application we will obtain some results concerning cyclicity.
In the next few pages we give some auxiliary definitions which will be essential throughout this investigation.

\subsection{Directed trees}
Concerning the definition of a directed tree we refer to the monograph \cite{tree-shift}. 
Throughout this paper $\irT=(V,E)$ will always denote a \textit{directed tree}, where $V$ is a non-empty (usually infinite) set and $E\subseteq V\times V\setminus \{(v,v)\colon v \in V\}$. 
We call an element of $V$ and $E$ a \textit{vertex} and a \textit{(directed) edge} of $\irT$, respectively. 
If we have an edge $(u,v)\in E$, then $v$ is called a \textit{child} of $u$, and $u$ is called the \emph{parent} of $v$.
The set of all children of $u$ is denoted by $\Chi_\irT(u)=\Chi(u)$, and the symbol $\Par_\irT(v) = \Par(v)$ stands for $u$. 
We will also use the notation $\Par^k(v) = \underbrace{\Par(\dots (\Par}_{k \text{-times}}(v))\dots)$ when it makes sense, and $\Par^0$ will be the identity map. 

If a vertex has no parent, then we call it a \textit{root} of $\irT$. 
A directed tree is either rootless or has a unique root (see \cite[Proposition 2.1.1]{tree-shift}) which, in this case, will be denoted by $\roo_\irT = \roo$. 
We will use the notation 
\[ 
V^\circ = \left\{ 
\begin{matrix}
V\setminus \{\roo\} & \text{if } V \text{ has a root,}\\
V & \text{elsewhere.}
\end{matrix}\right.
\]
If a vertex has no children, then we call it a \textit{leaf}, and $\irT$ is \textit{leafless} if it has no leaves. 
The set of all leaves of $\irT$ will be denoted by $\Lea(\irT)$. 
Given a subset $W\subseteq V$ of vertices, we put $\Chi(W) = \cup_{v\in W}\Chi(v)$, $\Chi^0(W) = W$ and $\Chi^{n+1}(W) = \Chi(\Chi^{n}(W))$ for all $n\in\N$. 
The set $\Des_\irT(W) = \Des(W)=\bigcup_{n=0}^\infty \Chi^{n}(W)$ is called the \textit{descendants} of the subset $W$, and if $W = \{u\}$, then we simply write $\Des(u)$.
If $n\in\N_0 (:=\N\cup\{0\})$, then the set $\Gen_{n,\irT}(u)=\Gen_n(u)=\bigcup_{j=0}^n\Chi^j(\Par^j(u))$ is called the \textit{$n$th generation} of $u$ and $\Gen_\irT(u)=\Gen(u)=\bigcup_{n=0}^\infty\Gen_n(u)$ is the \textit{(whole) generation} or the \textit{level} of $u$. 

From the equation 
\begin{equation}\label{level_eq}
V = \bigcup_{n=0}^\infty\Des(\Par^n(u))
\end{equation}
(see \cite[Proposition 2.1.6]{tree-shift}), one can easily see that the different levels can be indexed by the integer numbers (or by a subset of the integers) in such a way that if a vertex $v$ is in the $k$th level, then the children of $v$ are in the $(k+1)$th level, and whenever $\Par(v)$ is defined, it lies in the $(k-1)$th level.

\subsection{Bounded weighted shifts on directed trees}
The complex Hilbert space $\ell^2(V)$ is the usual space of all square summable complex functions on $V$ with the standard innerproduct
\[ \langle f,g\rangle = \sum_{u\in V} f(u)\overline{g(u)} \qquad (f,g\in\ell^2(V)). \]
For $u\in V$ we define $e_u(v)=\delta_{u,v}\in\ell^2(V)$, where $\delta_{u,v}$ is the Kronecker delta. 
Obviously the set $\{e_u\colon u\in V\}$ is an orthonormal base. We will refer to $\ell^2(W)$ as the subspace (i.e. closed linear manifold) $\vee\{e_w\colon w\in W\}$ for any subset $W\subseteq V$, where the symbol $\vee\{\dots\}$ stands for the generated subspace.

Let $\boldsymbol{\lambda} = \{\lambda_v\colon v\in V^\circ\}\subseteq\C$ be a set of \textit{weights} satisfying the following condition: $\sup\left\{\sqrt{\sum_{v\in\Chi(u)}|\lambda_v|^2}\colon u\in V\right\}<\infty$.
Then the \textit{weighted shift on the directed tree $\irT$} is the operator defined by
\[\Sl\colon \ell^2(V)\to \ell^2(V), \quad e_u\mapsto \sum_{v\in\Chi(u)} \lambda_v e_v. \]
By \cite[Proposition 3.1.8]{tree-shift} this defines a bounded linear operator with norm 
\[
\|\Sl\| = \sup\left\{\sqrt{\sum_{v\in\Chi(u)}|\lambda_v|^2}\colon u\in V\right\}.
\]

We will consider only bounded weighted shifts on directed trees, especially contractions (i.e. $\|\Sl\|\leq 1$) in certain parts of the paper. 
We recall that every $\Sl$ is unitarily equivalent to $S_{|\boldsymbol{\lambda}|}$ where $|\boldsymbol{\lambda}|:=\{|\lambda_v|\colon v\in V^\circ\}\subseteq[0,\infty)$ (see \cite[Theorem 3.2.1]{tree-shift}). 
Moreover, the unitary operator $U$ with $S_{|\boldsymbol{\lambda}|} = U\Sl U^*$ can be chosen such that $e_u$ is an eigen-vector of $U$ for every $u\in V$. 
It is also proposed in \cite[Proposition 3.1.6]{tree-shift} that if a weight $\lambda_v$ is zero, then the weighted shift on this directed tree is a direct sum of two other weighted shifts on directed trees. 
In view of these facts, this article will exclusively consider weighted shifts on directed trees with positive weights (i.e. $\lambda_v > 0$ for every $v\in V^\circ$), if we do not say otherwise.

The positivity of weights imply that every vertex has countably many children. 
Thus, by (\ref{level_eq}), $\ell^2(V)$ is separable.

\subsection{Asymptotic behaviour}
Let $\irH$ be a complex Hilbert space and let us denote the algebra of bounded linear operators on it by $\irB(\irH)$. If $T\in\irB(\irH)$ is a contraction, then the sequences $\{T^{*n}T^n\}_{n=1}^\infty$ and $\{T^{n}T^{*n}\}_{n=1}^\infty$ of positive contractions are decreasing. Therefore they have unique limits in the strong operator topology (SOT):
\[ A = A_T = \lim_{n\to\infty} T^{*n}T^n \quad\text{ and }\quad A_* = A_{T^*} = \lim_{n\to\infty} T^{n}T^{*n}. \]
The operator $A$ is the \textit{asymptotic limit} of $T$ and $A_*$ is the \textit{asymptotic limit} of the adjoint $T^*$. 

The vector $h\in\irH$ is called \textit{stable} for the contraction $T\in\irB(\irH)$ if the orbit of $h$ converges to 0, i.e. $\lim_{n\to\infty} \|T^n h\| = 0$ or equivalently $h \in \ker(A_T)$. The set $\ker(A_T)$ of all stable vectors is usually denoted by $\irH_0(T)$ and called the \emph{stable subspace} of $T$. We recall that the stable subspace is hyperinvariant for $T$ (i.e. invariant for every $C\in\irB(\irH)$ which commutes with $T$), which can be verified easily.

Contractions can be classified according to the asymptotic behaviour of their iterates and the iterates of their adjoints. Namely, $T$ is \textit{stable} or \textit{of class $C_{0\cdot}$} when $\irH_0(T) = \irH$, in notation: $T\in C_{0\cdot}(\irH)$. If the stable subspace consists only of the null vector, then $T$ is \textit{of class $C_{1\cdot}$} or $T\in C_{1\cdot}(\irH)$. In the case when $T^*\in C_{i\cdot}(\irH)$ ($i=0$ or 1), we say that $T$ is \textit{of class $C_{\cdot i}$}. Finally, the class $C_{ij}(\irH)$ stands for the intersection $C_{i\cdot}(\irH)\cap C_{\cdot j}(\irH)$.

By $\{\dots\}^-$ we mean the closure of a set. 
We recall that the operator $X\in\irB(\irH,\ran(A_T)^-) = \irB(\irH,\irH_0(T)^\perp)$, $Xh = A_T^{1/2}h$ acts as an intertwining mapping in a canonical realization of the so called isometric asymptote of the contraction $T$. 
This and the unitary asymptote are very efficient tools in the theory of Hilbert space contractions. Here we only give the specific realization but we note that there is a more general setting (\cite{BerKer}). 
There exists a unique isometry $U = U_T \in \irB(\ran(A_T)^-)$ such that $XT = UX$ holds. 
The isometry $U$ (or sometimes the pair $(X,U)$) is the \textit{isometric asymptote} of $T$. 
For a detailed study of isometric and unitary asymptotes, including other useful realizations (e.g. with the *-residual part of the minimal unitary dilation of $T$), we refer to \cite[Chapter IX]{NFBK} and \cite{KerchyTotik}. 
(We notice that in some papers about unitary asymptotes, $X$ is denoted by $X_+$ and the intertwining mapping of the unitary asymptote is denoted by $X$).

There are several applications for the isometric (and unitary) asymptotes.
They play an important role in the hyperinvariant subspace problem, similarity problems and operator models (see e.g. \cite{BerKer,Ca,Du,Ke_gen_Toep,Ke_isom_as,DuKu,NFBK}).

In the next section we show how the isometric asymptote can be used in order to obtain cyclicity results. Section 3 and 4 are technical parts of the paper devoted to calculating the asymptotic limits $A$ and $A_*$ and the isometric asymptotes $U$ and $U_*$ of the contractive $\Sl$ and $\Sl^*$, respectively. After that in Section 5 we characterize cyclicity of weighted backward shift operators. Finally, in the last three sections we investigate cyclic properties of weighted shifts on directed trees and their adjoints, using some similarity results and the results of Section 3-4.


\section{Cyclic properties of contractions and their isometric asymptote}

This section is devoted to explaining how the asymptotic behaviour can be used in order to obtain cyclicity results for contractions. 
We call the operator $T\in\irB(\irH)$ \textit{cyclic} if there exists a vector such that 
\[ \irH_{T,h} := \vee\{T^n h\colon n\in\N_0\} = \{p(T) h \colon p\in \irP_\C\}^-  = \irH, \]
where $\irP_\C$ denotes the set of all complex polynomials. Such a vector $h\in\irH$ is called a \textit{cyclic vector} for $T$. 

The vector $h\in\irH$ is \textit{hypercyclic} for $T$ if we have
\[ \{T^n h\colon n\in\N_0\}^- = \irH. \]
Then the operator $T$ is \textit{hypercyclic}. By a \emph{nilpotent} operator $N\in\irB(\irH)$ we mean that there exists a $k\in\N$ such that $N^k = 0$.

If $T$ is cyclic and has dense range, then $h$ is cyclic if and only if $Th$ is cyclic. This and a consequence are stated in the next lemma for Hilbert spaces, but we note that in Banach spaces the proof would be the same. This also shows that the set of cyclic vectors span the whole space, when $\ran T$ is dense. In fact, this is always true, see \cite{GeherL} for an elementary proof.

\begin{lem}\label{cyclic_denserenage_nilp_lem}
\begin{itemize}
\item[\textup{(i)}] if $T, Q, Y\in \irB(\irH)$, $Y$ has dense range, $YT = QY$ holds and $f$ is cyclic (or hypercyclic, resp.) for $T$, then $Yf$ is cyclic (or hypercyclic, resp.) for $Q$.
\item[\textup{(ii)}] If a dense range operator $T\in\irB(\irH)$ has a cyclic vector $f$, then $T f$ is also a cyclic vector.
\item[\textup{(iii)}] If the cyclic operator $T\in\irB(\irH)$ has dense range, and $N\in\irB(\C^n)$ ($n\in\N$) is cyclic and nilpotent, then $T\oplus N$ is also cyclic.
\end{itemize}
\end{lem}

\begin{proof}
(i) Of course $Y p(T) = p(Q) Y$ holds for all $p\in\irP_\C$. 
Let us assume that $f$ is cyclic for $T$, i.e. $\{p(T)f\colon p\in\irP_\C\}$ is dense in $\irH$. 
Then $\{Y p(T)f \colon p\in\irP_\C\} = \{p(Q) Y f \colon p\in\irP_\C\}$ is also dense, which implies that $Y f$ is cyclic for $Q$. 
The hypercyclic case is very similar.

(ii) This follows from (i) by choosing $Q=Y=T$.

(iii) Let us take a cyclic vector $f \in \irH$ for $T$ and a cyclic vector $e \in \C^n$ for $N$.
We show that $f\oplus e$ is cyclic for the orthogonal sum $T\oplus N$. 
Of course $\vee\{T^k f \oplus N^k e \colon k\geq n\} = \vee\{T^k f \oplus 0 \colon k\geq n\} = \irH$. 
Therefore $0 \oplus N^j e \in \vee\{T^k f \oplus N^k e \colon k \in \N_0\}$ for every $0 \leq j < n$, which implies that $f\oplus e$ is a cyclic vector.
\end{proof}

The previous and the next lemma will be used several times throughout this paper. 

\begin{lem} \label{is_as_forcyclem}
\begin{itemize}
\item[\textup{(i)}] If $T\in C_{1\cdot}(\irH)$ is a contraction and the isometric asymptote $U$ has no cyclic vectors, then neither has $T$,
\item[\textup{(ii)}] if $T\in C_{1\cdot}(\irH)$ is a contraction and the adjoint of the isometric asymptote $U^*$ has a cyclic vector $g$, then $A^{1/2} g$ is cyclic for $T^*$,
\item[\textup{(iii)}] if $T\in C_{\cdot 1}(\irH)$ is a contraction and the adjoint of the isometric asymptote $U_*^*$ has a cyclic vector $g$, then $A_*^{1/2} g$ is cyclic for $T$.
\end{itemize}
\end{lem}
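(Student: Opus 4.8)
The plan is to treat part (i) as the fundamental intertwining lemma and then derive (ii)--(iv) from it by choosing the right intertwiner and exploiting the defining relation $XT=UX$ of the isometric asymptote together with the identity $\langle Xh, Xh\rangle = \|A^{1/2}h\|^2$.

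For part (i), I would start from the hypothesis that $f$ is cyclic for $T$, so that $\{p(T)f\colon p\in\irP_\C\}^- = \irH$. Applying the continuous operator $Y$ and using the intertwining relation $YT=QY$ (hence $Y\,p(T)=p(Q)\,Y$ for every polynomial $p$), the set $\{p(Q)Yf\colon p\in\irP_\C\}$ equals $Y\{p(T)f\colon p\in\irP_\C\}$. Since $Y$ is continuous with dense range, it maps the dense set $\{p(T)f\}$ to a dense subset of $\irH$; taking closures gives $\{p(Q)Yf\}^-=\irH$, so $Yf$ is cyclic for $Q$. The hypercyclic case is identical, replacing polynomials-applied-to-$T$ orbits by pure orbits $\{T^nf\}$ and $\{Q^nYf\}=\{Y T^nf\}$. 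The only thing to check carefully is that density is genuinely preserved: a continuous dense-range map sends dense sets to dense sets, which is elementary.

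For part (ii), I would use the isometric asymptote $(X,U)$ with $X\in\irB(\irH,\irR(A_T)^-)$, $Xh=A_T^{1/2}h$, satisfying $XT=UX$. When $T\in C_{1\cdot}$ we have $\irH_0(T)=\irN(A_T)=\{0\}$, so $A_T$ is injective and $X$ has dense range onto $\irH_0^\perp=\irR(A_T)^-=\irH$, i.e. $X$ has dense range. Thus by part (i) applied with $Y=X$, $Q=U$, any cyclic vector $f$ for $T$ would produce a cyclic vector $Xf$ for $U$; contrapositively, if $U$ has no cyclic vector then $T$ has none. For parts (iii) and (iv) I would take adjoints of the intertwining relation: $XT=UX$ gives $T^*X^*=X^*U^*$, so $X^*$ intertwines $U^*$ into $T^*$ with $Y=X^*$, $T\rightsquigarrow U^*$, $Q\rightsquigarrow T^*$. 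The range of $X^*$ is dense exactly when $X$ is injective, which again holds because $A_T$ (hence $A_T^{1/2}=X$ as a map into its range) is injective in the $C_{1\cdot}$ case. Applying part (i), a cyclic vector $g$ for $U^*$ maps to the cyclic vector $X^*g$ for $T^*$; and since $X=A_T^{1/2}$ acting as a positive operator is self-adjoint, $X^*g=A^{1/2}g$, giving the stated formula. Part (iv) is the verbatim dual statement for $T^*\in C_{1\cdot}$, equivalently $T\in C_{\cdot 1}$, using the isometric asymptote $(X_*,U_*)$ of $T^*$ and the identity $A_*^{1/2}g=X_*^*g$.

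The main obstacle I anticipate is purely bookkeeping rather than conceptual: one must be scrupulous about \emph{which} operator plays the role of $Y$ and in \emph{which} direction it has dense range, because (ii) uses $X$ itself while (iii)--(iv) use $X^*$, and the dense-range hypothesis for $X^*$ is really the \emph{injectivity} of $X$. Both facts reduce to $\irN(A_T)=\{0\}$, so the crux is simply invoking the $C_{1\cdot}$ (resp. $C_{\cdot 1}$) assumption correctly. The subtle identification $X^*=A^{1/2}$ deserves a one-line justification: $X$ equals $A^{1/2}$ viewed as a map into the closure of its range, and a positive operator is self-adjoint, so no sign or adjoint ambiguity arises in the formula for the resulting cyclic vector.
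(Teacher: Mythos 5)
Your proposal is correct and follows essentially the same route as the paper: part (i) is proved by the identical dense-range intertwining argument, and the paper then simply declares (ii)--(iv) to be ``special cases of (i)''. Your elaboration of those special cases --- using $X=A^{1/2}$ with dense range in the $C_{1\cdot}$ case for (ii), and $X^*$ (dense range via injectivity of $A^{1/2}$, with $X^*g=A^{1/2}g$ by self-adjointness) for (iii)--(iv) --- is exactly the intended, and correct, way to fill in that claim.
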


We omit the proofs, since every point is a straightforward consequence of (i) in Lemma \ref{cyclic_denserenage_nilp_lem}. 
We close this section with discussing the cyclicity of contractive $C_{\cdot 1}$-class weighted bilateral shift operators. 
Let $\boldsymbol{w} = \{w_k\}_{k=-\infty}^\infty$ be a sequence such that $0<|w_k|\leq 1$. 
The weighted bilateral shift operator (of multiplicity one) $S_{\boldsymbol{w}}\in\irB(\ell^2(\Z))$ is defined by $S_{\boldsymbol{w}} e_k = w_{k+1} e_{k+1}$ $(k\in\Z)$ (trivially $S_{\boldsymbol{w}}$ is a weighted shift on the directed tree $(\Z,E)$ where $E = \{(k,k+1)\colon k\in\Z\}$).
Thus $S_{\boldsymbol{w}}^* e_k = \overline{w_k} e_{k-1}$ $(k\in\Z)$.
An easy calculation shows that the following equation holds:
\[ 
A_* e_k = \Big(\prod_{j\leq k} |w_j|^2\Big) e_k \quad (k\in\Z). 
\]
This means that $S_{\boldsymbol{w}}\in C_{\cdot 0}(\ell^2(\Z))\cup C_{\cdot 1}(\ell^2(\Z))$. 
In case when we have $S_{\boldsymbol{w}} \in C_{\cdot 1}(\ell^2(\Z))$, the isometric asymptote of $S_{\boldsymbol{w}}^*$: $U_*e_k = e_{k-1}$ $(k\in\Z)$, is unitarily equivalent to the simple bilateral shift operator. 
Since $U_*^*$ is clearly cyclic, every contractive $C_{\cdot 1}$-class weighted bilateral shift operator is cyclic. 

In this paper an orthogonal sum of weighted bilateral shift operators $\sum_{j\in\irJ} \oplus S_{\boldsymbol{w}^{(j)}}$ will be also called a weighted bilateral shift operator. 
The multiplicity of such an operator is the cardinality of $\irJ$ which is denoted by $\#\irJ$. 
We define the multiplicity of unilateral or backward shift operators very similarly.

If $S_{\boldsymbol{w}} \in C_{\cdot 0}(\ell^2(\Z))$, then we cannot use the above technique.
We note that bilateral shift operators which do not have any cyclic vector exist.
The first example was given by B. Beauzamy in \cite{non-cyclic_bil_shift} (see also \cite[Proposition 42]{Shields}).
As far as we know there is no characterization for cyclic weighted bilateral shift operators which is quite surprising, since for other cyclic type properties we can find characterizations (for example hyper- or supercyclicity can be found in \cite{hypercyclic_bil} and \cite{supercyclic_bil}). 
In our opinion it is a challenging problem to give this characterization for cyclicity.


\section{Asymptotic limits of contractive weighted shifts on directed trees}

In this section we prove several results concerning the asymptotic behaviour of contractive weighted shifts on directed trees.
They will be crucial later.
The powers of $\Sl$ were calculated in \cite[Lemma 2.3.1]{tree-shift-}, namely we have
\[ 
\Sl ^n e_u = \sum_{v\in\Chi^n(u)} \prod_{j=0}^{n-1}\lambda_{\Par^j(v)}\cdot e_v \quad (u\in V, n\in\N)
\]
and
\[ 
\Sl ^{*n} e_u = \left\{\begin{matrix}
\prod_{j=0}^{n-1}\lambda_{\Par^j(u)}\cdot e_{\Par^n(v)}, & \textit{if } \Par^n(u) \textit{ makes sense},\\
0, & \textit{otherwise} 
\end{matrix}\right.  \; (u\in V, n\in\N).
\]
Now the asymptotic limit $A$ of $\Sl$ can be easily obtained.

\begin{lem} \label{aslim_lem}
Let $\Sl$ be a contractive weighted shift on $\irT$. Then the limits 
\[ \alpha_u := \lim_{n\to\infty} \sum_{v\in \Chi^n(u)}\prod_{j=0}^{n-1}\lambda_{\Par^j(v)}^2 \in [0,1]  \quad (u\in V) \] 
exist, and we have
\[A e_u = \alpha_u e_u \quad (u\in V).\]
\end{lem}

\begin{proof}
For every $n\in\N$ and $u\in V$ the following holds:
\[ \Sl ^{*n}\Sl ^n e_u = \sum_{v\in\Chi^n(u)} \prod_{j=0}^{n-1}\lambda_{\Par^j(v)}\cdot \Sl ^{*n}e_v = \sum_{v\in\Chi^n(u)} \prod_{j=0}^{n-1}\lambda_{\Par^j(v)}^2 \cdot e_{u}. \]
Since $\{\Sl ^{*n}\Sl ^n\}_{n=1}^\infty$ converges in SOT, the number $\alpha_u$ exists for every $u\in V$. 
Furthermore, every $\alpha_u$ lies in [0,1], because $\Sl$ is a contraction. By definition, we get $A e_u = \alpha_u e_u \; (u\in V)$.
\end{proof}

Next we obtain some properties of the structure of the stable subspace of $\Sl$ which will be denoted by $\irH_0$ (instead of $\irH_0(\Sl)$) throughout this paper. 
Since $A$ is a diagonal operator, there exists a set $V'\subset V$ such that we have $\irH_0 = \ell^2(V\setminus V')$ and $\irH_0^\perp = \ell^2(V')$.

\begin{prop} \label{stable_subspc_prop}
The following implications are valid for every contractive weighted shift $\Sl$ on $\irT$ and vertex $u\in V$:
\begin{itemize}
\item[\textup{(i)}] if $e_u\in\irH_0$, then $\ell^2(\Des(u))\subseteq\irH_0$ (i.e. $u \notin V' \Longrightarrow \Des(u) \subseteq V\setminus V'$),
\item[\textup{(ii)}] $e_u\in\irH_0$ if and only if $\ell^2(\Chi(u))\subseteq\irH_0$ (i.e. $u \notin V' \iff \Chi(u) \subseteq V\setminus V'$); in particular, we have $\ell^2(\Lea(\irT)) \subseteq \irH_0$,
\item[\textup{(iii)}] if $e_u\in\irH_0^\perp$, then $e_{\Par^k(u)}\in\irH_0^\perp$ for every $k\in\N_0$ (i.e. $u \in V' \Longrightarrow \Par^k(u) \in V', \; \forall \; k\in \N_0$),
\item[\textup{(iv)}] the subgraph $\irT' = (V',E') = (V',E\cap (V'\times V')))$ is a leafless subtree,
\item[\textup{(v)}] if $\irT$ has no root, neither has $\irT'$, and
\item[\textup{(vi)}] if $\irT$ has a root, then either $\Sl\in C_{0\cdot}(\ell^2(V))$ or $\roo_\irT=\roo_{\irT'}$.
\end{itemize}
\end{prop}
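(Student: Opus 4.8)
The plan is to reduce all six assertions to a single recursion relating the eigenvalue $\alpha_u$ to the eigenvalues of the children of $u$. First I would observe that, since left and right multiplication by a fixed bounded operator is continuous in the strong operator topology on bounded sets, applying $\Sl^*(\cdot)\Sl$ to the defining limit of $A$ yields, in the strong operator topology,
\[ \Sl^* A \Sl = \lim_{n\to\infty} \Sl^{*(n+1)}\Sl^{n+1} = A. \]
Evaluating $\langle \Sl^* A \Sl\, e_u, e_u\rangle = \langle A\Sl e_u, \Sl e_u\rangle$ with $\Sl e_u = \sum_{v\in\Chi(u)}\lambda_v e_v$ and $A e_v = \alpha_v e_v$ then gives the key recursion
\[ \alpha_u = \sum_{v\in\Chi(u)} \lambda_v^2\, \alpha_v, \qquad u\in V. \]
Throughout I write $V' = \{u\in V\colon \alpha_u>0\}$, which is consistent with $\irH_0^\perp=\ell^2(V')$ since $A e_u = \alpha_u e_u$ and $\irH_0=\irN(A)$.

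Parts (i)--(iii) are then purely combinatorial consequences of this recursion and the strict positivity of the weights. For (ii), every summand $\lambda_v^2\alpha_v$ is nonnegative and $\lambda_v>0$, so $\alpha_u$ vanishes exactly when each $\alpha_v$ ($v\in\Chi(u)$) vanishes; the leaf case is simply the empty-sum convention $\alpha_u=0$. Part (i) follows by induction on the generation index, $\alpha_u=0$ forcing $\alpha_v=0$ for every $v\in\Chi(u)$ by (ii), and iterating over $\Chi^n(u)$ exhausts $\Des(u)$. Part (iii) is the contrapositive of (i): if $\alpha_u>0$ and $\Par^k(u)$ makes sense then $u\in\Des(\Par^k(u))$, so $\alpha_{\Par^k(u)}=0$ would force $\alpha_u=0$; equivalently one reads it straight off the recursion, $\alpha_{\Par(u)}\geq \lambda_u^2\alpha_u>0$, and iterates.

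For (iv) I would verify the three tree axioms plus leaflessness for $\irT'=(V',E')$. Absence of circuits and the at-most-one-parent property are inherited automatically because $\irT'$ is a subgraph of $\irT$, and a parent of $v$ inside $\irT'$ can only be $\Par_\irT(v)$. Leaflessness is again the recursion: $\alpha_u>0$ forces some child $v\in\Chi(u)$ with $\alpha_v>0$, hence an edge $(u,v)\in E'$, so $u$ is no leaf of $\irT'$. The one non-formal point is connectedness, where I would invoke (\ref{level_eq}): given $u,v\in V'$, that identity produces a common ancestor $c=\Par^n(u)=\Par^m(v)$, and part (iii) guarantees that the entire upward paths $u\to c$ and $v\to c$ stay in $V'$, giving an undirected path inside $\irT'$ (if $V'=\emptyset$ the statement is vacuous). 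Finally (v) and (vi) are short corollaries of (iii): in the rootless case every $u\in V'$ has $\Par_\irT(u)\in V'$, so $u$ retains a parent in $\irT'$ and $\irT'$ is rootless; in the rooted case, if $\Sl\notin C_{0\cdot}(\ell^2(V))$ then $V'\neq\emptyset$, the root $\roo_\irT$ is an ancestor of any vertex of $V'$ and hence lies in $V'$ by (iii) while having no parent there, so by uniqueness of the root of the tree $\irT'$ we conclude $\roo_{\irT'}=\roo_\irT$.

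The step I expect to be most delicate is connectedness in (iv): it is the only place that genuinely uses the global tree structure (through (\ref{level_eq})) rather than the local recursion, and it must be arranged to cover the rooted and rootless cases uniformly via the common-ancestor construction. Everything else is bookkeeping once the recursion $\alpha_u=\sum_{v\in\Chi(u)}\lambda_v^2\alpha_v$ is established.
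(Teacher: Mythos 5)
Your proposal is correct and follows essentially the same route as the paper: both reduce everything to the recursion $\alpha_u=\sum_{v\in\Chi(u)}\lambda_v^2\alpha_v$ together with the strict positivity of the weights, and then deduce (i)--(vi) combinatorially, with the connectedness of $\irT'$ handled by exactly the same common-ancestor argument via (\ref{level_eq}) and part (iii). The only cosmetic differences are that you obtain the recursion from the operator identity $\Sl^*A\Sl=A$ whereas the paper derives it by interchanging the limit with the sum over $\Chi(u)$ in the explicit formula for $\alpha_u$ (justified by monotonicity and boundedness), and that the paper proves (i) directly from the invariance of $\irH_0$ under $\Sl$ while you recover it by induction from (ii); both variants are equally valid.
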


\begin{proof}
The fact that $\irH_0$ is invariant for $\Sl$ and that the weights are positive implies (i). 

The sufficiency in (ii) is a part of (i). On the other hand, suppose that $\ell^2(\Chi(u))\subseteq\irH_0$. Then 
\[ \alpha_u = \lim_{n\to\infty} \sum_{w\in \Chi^n(u)}\prod_{j=0}^{n-1}\lambda_{\Par^j(w)}^2 \]
\[ = \lim_{n\to\infty} \sum_{v\in\Chi(u)} \lambda_{v}^2 \sum_{w\in \Chi^{n-1}(v)}\prod_{j=0}^{n-2}\lambda_{\Par^j(w)}^2 = \sum_{v\in\Chi(u)} \lambda_{v}^2 \alpha_{v} =0 \]
is fulfilled, since $\sum_{v\in\Chi(u)} \lambda_{v}^2 \sum_{w\in \Chi^{n-1}(v)}\prod_{j=0}^{n-2}\lambda_{\Par^j(w)}^2 \leq \sum_{v\in\Chi(u)} \lambda_{v}^2 \leq 1$ hold for all $n\in\N$ and $\sum_{w\in \Chi^{n-1}(v)}\prod_{j=0}^{n-2}\lambda_{\Par^j(w)}^2\searrow\alpha_v$. This proves the necessity in (ii). 

Point (iii) follows from (ii) immediately.

Now we turn to the verification of (iv). 
We have to check three conditions for $\irT'$ to be a subtree. 
Two of them are obvious since they were also true in $\irT$. In order to see the connectedness of $\irT'$, two distinct $u',v'\in V'$ are taken.
Since $V = \cup_{j=0}^\infty \Des_\irT(\Par^j_\irT(u'))$, the equation $\Par^k_\irT(u') = \Par^l_\irT(v')$ holds with some $k,l\in\N_0$. 
Then (iii) gives $\Par^i_\irT(u') = \Par^j_\irT(v') \in V'$ for every $i\leq k$ and $j\leq l$, which provides an undirected path in $\irT'$ connecting $u'$ and $v'$.
Finally by (ii) it is trivial that $\irT'$ is leafless.

The last two points immediately follow from (iii).
\end{proof}

In view of (v)-(vi), we have $\Par_{\irT}(u')=\Par_{\irT'}(u')$ for any $u'\in V'$, so we will simply write $\Par(u')$ in this case as well.
We note that in Proposition \ref{stable_subspc_prop} some points are not true, if we allow zero weights.

At the end of this section we identify the asymptotic limit $A_*$ of the adjoint $\Sl^*$. 
The stable subspace of $\Sl^*$ will be denoted by $\irH_0^*$. 
In the sequel we will use the following. If we have sequence $\{s_j\}_{j=1}^\infty \subset (0,1]$, then we have two possibilities. The first one is when the infinite product $\prod_{j=1}^\infty s_j$ is convergent, in this case we have unconditional convergence, i.e. $\prod_{j=1}^\infty s_{\sigma(j)} = \prod_{j=1}^\infty s_j \in (0,\infty)$ for every permutation $\sigma\colon\N\to\N$. The second one is when $\prod_{j=1}^\infty s_j$ is divergent to zero, in this case we have unconditional divergence to zero, i.e. $\prod_{j=1}^\infty s_{\sigma(j)} = 0$ for every permutation $\sigma\colon\N\to\N$.

\begin{prop} \label{dual_aslim_prop}
If $\Sl$ is a contractive weighted shift on the directed tree $\irT$, then the following two points are satisfied:
\begin{itemize}
\item[\textup{(i)}] If $\irT$ has a root, then $\Sl^*$ is stable.
\item[\textup{(ii)}] If $\irT$ is rootless, then
\[ 
h_u := \sum_{v\in\Gen(u)}\prod_{j=0}^\infty\lambda_{\Par^j(v)}\cdot e_v \in\ell^2(V) \quad (u\in V),
\]
and 
\[
\irH_0^{*\perp} = \vee\{h_u\colon u\in V\}.
\]
If $h_u\neq 0$ for some $u\in V$, then this holds for every $u\in V$.
Moreover, in this case we have $h_u = h_v$ if and only if $u\in\Gen(v)$, and the vectors $h_u$ are eigen-vectors:
\[ A_* h_u = a_u h_u \qquad (u\in V) \]
with the corresponding eigen-values
\[ a_u := \|h_u\|^2 = \sum_{v\in\Gen(u)}\prod_{j=0}^\infty\lambda_{\Par^j(v)}^2. \]
\end{itemize}
\end{prop}

\begin{proof}
The first statement is clear, so we only deal with (ii). Since we have 
\[
\sum_{v\in\Gen_n(u)}\prod_{j=0}^{\infty}\lambda_{\Par^j(v)}^2 \leq \sum_{v\in\Gen_n(u)}\prod_{j=0}^{n-1}\lambda_{\Par^j(v)}^2 = \|\Sl^n  e_{\Par^n(u)}\|^2 \leq 1 \quad (n\in\N),
\]
we obtain $h_u\in \ell^2(V)$ for every $u\in V$.
For every $n\in\N$ we compute the following:
\[ 
\Sl ^n\Sl ^{*n}e_u = \prod_{j=0}^{n-1}\lambda_{\Par^j(u)}\cdot \Sl ^n e_{\Par^n(u)} =  \prod_{j=0}^{n-1}\lambda_{\Par^j(u)}\sum_{v\in\Gen_n(u)} \prod_{j=0}^{n-1}\lambda_{\Par^j(v)}\cdot e_v. 
\]
Since $\lim_{n\to\infty}\Sl ^n\Sl ^{*n}e_u = A_*e_u$, we get
\[ 
\langle A_*e_u,e_v\rangle  = \left\{\begin{matrix}
\prod_{j=0}^{\infty}\lambda_{\Par^j(u)}\prod_{j=0}^{\infty}\lambda_{\Par^j(v)} & \text{if } v\in\Gen(u) \\
0 & \text{otherwise}
\end{matrix}\right., 
\]
which yields 
\begin{equation}\label{eq1}
A_*e_u = \prod_{j=0}^{\infty}\lambda_{\Par^j(u)}\sum_{v\in\Gen(u)} \prod_{j=0}^{\infty}\lambda_{\Par^j(v)}\cdot e_v = \prod_{j=0}^{\infty}\lambda_{\Par^j(u)}\cdot h_u \quad (u\in V).
\end{equation}
Therefore we conclude $\irH_0^{*\perp} = \ran(A_*)^- = \vee\{h_u\colon u\in V\}$.

Now, we calculate
\[ 
A_*h_u = \sum_{v\in\Gen(u)}\prod_{j=0}^\infty\lambda_{\Par^j(v)}\cdot A_*e_v = \bigg(\sum_{v\in\Gen(u)}\prod_{j=0}^\infty\lambda_{\Par^j(v)}^2\bigg) h_u \quad (u\in V).
\]
It is easy to see that if we have $h_u = 0$ for some $u\in V$, then $\prod_{j=0}^\infty\lambda_{\Par^j(v)} = 0$ holds for every $v\in V$. But then \eqref{eq1} gives $\irH_0^* = \ell^2(V)$.

Finally, if we have $h_u\neq 0$ for every $u\in V$, then by the definition of $h_u$ it is clear that $h_u = h_v$ holds if and only if $u\in\Gen(u)$.
\end{proof}


\section{Isometric asymptotes of contractive weighted shifts on directed trees}

In this section we want to describe the isometric asymptote of $\Sl$. 
If $C$ is an arbitrary set, then let
\[|C| := \left\{\begin{matrix}
n & \text{if } C \text{ has exactly } n\in \N_0 \text{ elements},\\
\infty & \text{if } C \text{ has infinitely many elements}.
\end{matrix}\right.\]
We call the vertex $u$ a \textit{branching vertex} if $|\Chi(u)| > 1$.
The set of all branching vertices is denoted by $V_\prec$. The quantity 
\[ \Br(\irT) := \sum_{u\in V_\prec} (|\Chi(u)|-1) \in \N_0\cup\{\infty\}\] 
is the \textit{branching index} of $\irT$. By (ii) of \cite[Proposition 3.5.1]{tree-shift} we have 
\begin{equation} \label{co-rank_eq} 
\dim(\ran(\Sl)^\perp) 
= \left\{ \begin{matrix}
1+\Br(\irT) & \text{if } \irT \text{ has a root,} \\
\Br(\irT) & \text{if } \irT \text{ has no root.} 
\end{matrix} \right.
\end{equation}
In Proposition \ref{stable_subspc_prop} we used the notation $\irT' = (V',E')$ for the subtree such that $\ell^2(V') = \irH_0^\perp$. 
We will write $S\in\irB(\ell^2(\Z))$ and $S^+\in\irB(\ell^2(\N_0))$ for the simple bilateral and unilateral shift operators (of multiplicity one), i.e.: $Se_n = e_{n+1}$ $(n\in\Z)$ and $S^+e_k = e_{k+1}$ $(k\in\N_0)$. 
The contraction $T$ is called completely non-unitary (or c.n.u. for short) if the only reducing subspace $\irM$ such that $T|\irM$ is a unitary operator is the trivial $\{0\}$ subspace. By the Sz.-Nagy--Foias--Langer decomposition theorem the contraction $T$ is c.n.u. if and only if $\ker(A-I)\cap\ker(A_*-I) = \{0\}$. From the von Neumann--Wold decomposition it is clear that the c.n.u. isometries are exactly those which are unitarily equivalent to a simple unilateral shift operator (not necessarily of multiplicity one).

\begin{thm} \label{isom_as_thm}
Let us consider a contractive weighted shift $\Sl$ on the directed tree $\irT$ such that $\Sl \notin C_{0\cdot}(\ell^2(V))$.
Then the isometric asymptote $U = S_{\boldsymbol{\beta}}\in\irB(\ell^2(V'))$ is a weighted shift on the subtree $\irT' = (V',E')$ with weights 
\begin{equation}\label{Sb_eq}
\boldsymbol{\beta} = \left\{\beta_{v'} = \frac{\lambda_{v'}\sqrt{\alpha_{v'}}}{\sqrt{\alpha_{\Par(v')}}}\colon v'\in (V')^\circ\right\},
\end{equation}
where $\alpha_{v'}$ is as in Lemma \ref{aslim_lem}.
Moreover, this isometry is unitarily equivalent to the following orthogonal sum:
\begin{itemize}
\item[\textup{(i)}] $\sum_{j=1}^{\Br(\irT')+1}\oplus S^+$, if $\irT$ has a root,
\item[\textup{(ii)}] $\sum_{j=1}^{\Br(\irT')}\oplus S^+$, if $\irT$ has no root and $U$ is a c.n.u. isometry, i.e. when \\ $\sum_{v'\in\Gen_{\irT'}(u')}\prod_{j=0}^\infty\beta_{\Par^j(v')}^2 = 0$ for some $u'\in V'$,
\item[\textup{(iii)}] $S\oplus\sum_{j=1}^{\Br(\irT')}\oplus S^+$, if $\irT$ has no root and $U$ is not a c.n.u. isometry.
\end{itemize}
\end{thm}

\begin{proof}
For any $u'\in V'$ we have the following equation:
\[ 
U e_{u'} = \frac{1}{\sqrt{\alpha_{u'}}}\cdot UA^{1/2}e_{u'} = \frac{1}{\sqrt{\alpha_{u'}}}\cdot A^{1/2}\Sl e_{u'} 
\]
\[ 
= \frac{1}{\sqrt{\alpha_{u'}}}\cdot \sum_{v\in\Chi_\irT(u')}\lambda_{v} \cdot A^{1/2}e_{v} = \sum_{v'\in\Chi_{\irT'}(u')} \frac{\lambda_{'v} \sqrt{\alpha_{v'}}}{\sqrt{\alpha_{u'}}}\cdot e_{v'}. 
\]
This shows that $U = S_{\boldsymbol{\beta}}$ is indeed a weighted shift on $\irT'$ with \eqref{Sb_eq}.

First, we suppose that $\irT$ has a root. Then by Proposition \ref{stable_subspc_prop} $\irT'$ has the same root as $\irT$. But contractive weighted shifts on a directed tree which has a root are of class $C_{\cdot 0}$, so in this case $U$ is unitarily equivalent to a simple unilateral shift operator. Since the co-rank of $U$ is $\Br(\irT')+1$, we infer that $U$ and $\sum_{j=1}^{\Br(\irT')+1}\oplus S^+$ are unitarily equivalent.

Second, we assume that $\irT$ has no root and $U$ is a c.n.u. isometry. 
The isometry $U$ is c.n.u. if and only if $U\in C_{\cdot 0}(\ell^2(V'))$, and by Proposition \ref{dual_aslim_prop} this happens if and only if $\sum_{v'\in\Gen_{\irT'}(u')}\prod_{j=0}^\infty\beta_{\Par^j(v')}^2 = 0$ for some (and then for every) $u'\in V'$. 
Since the co-rank of $U$ is $\Br(\irT')$, the isometry $U$ is unitarily equivalent to $\sum_{j=1}^{\Br(\irT')}\oplus S^+$.

Finally, let us suppose that $\irT$ has no root and $\sum_{v'\in\Gen_{\irT'}(u')}\prod_{j=0}^\infty\beta_{\Par^j(v')}^2 > 0$ for every $u'\in V'$. 
By Proposition \ref{dual_aslim_prop} the unitary part of $U$ clearly acts on the subspace
\[(\irH_0^*(U))^\perp = \bigvee\bigg\{k_{u'} = \sum_{v'\in\Gen_{\irT'}(u')}\prod_{j=0}^\infty\beta_{\Par^j(v')}\cdot e_{v'}\colon u'\in V'\bigg\}.\]
Set $u'\in V'$, then we compute the following: 
\[ U k_{u'} = \sum_{v'\in\Gen_{\irT'}(u')}\prod_{j=0}^\infty\beta_{\Par^j(v')}\cdot Ue_{v'} \]
\[ = \sum_{v'\in\Gen_{\irT'}(u')}\sum_{w'\in\Chi_{\irT'}(v')} \prod_{j=0}^\infty\beta_{\Par^j(v')}\cdot\beta_{w'}e_{w'} \]
\[=  \sum_{w'\in\Gen_{\irT'}(\tilde{w}')} \prod_{j=0}^\infty\beta_{\Par^j(w')}\cdot e_{w'} = k_{\tilde{w}'}\]
with some $\tilde{w}'\in\Chi_{\irT'}(u')$. Therefore we get that $U|\irH_0^*(U)$ is a simple bilateral shift operator. Since the co-rank of $U$ is precisely $\Br(\irT')$, we obtain that $U$ is unitarily equivalent to $S\oplus\sum_{j=1}^{\Br(\irT')}\oplus S^+$.
\end{proof}

\begin{rem} \label{unitarily_eq_rem}
(i) From the theorem above we can calculate the unitary asymptote of $\Sl$. 
In fact, it is the minimal unitary dilation $W$ of the isometry $U$. 
It is easy to see that this minimal unitary dilation is unitarily equivalent to a simple bilateral shift operator of multiplicity $\Br(\irT')$ or $\Br(\irT')+1$.

(ii/a) If the directed tree $\irT$ has a root, then any isometric weighted shift on $\irT$ is of class $C_{\cdot 0}$, i.e.: it is unitarily equivalent to a simple unilateral shift operator with multiplicity $\Br(\irT)$.

(ii/b) In general if we have an isometric weighted shift $U$ on a directed tree, then the structure of the tree does not tell us whether $U$ is a c.n.u. isometry or not. 
To see this take a rootless binary tree (i.e. $|\Chi(u)|=2$ holds for every $u\in V$). 
If we set the weights $\gamma_v := \frac{1}{\sqrt{2}}$ $(v\in V^\circ)$, then $S_{\boldsymbol{\gamma}}$ is clearly an isometry with $\sum_{v\in\Gen_{\irT}(u)}\prod_{j=0}^\infty\gamma_{\Par^j(v)}^2 = \sum_{v\in\Gen_{\irT}(u)}\prod_{j=0}^\infty \frac{1}{2} = 0$ for all $u\in V$. 
Therefore by Theorem \ref{isom_as_thm}, $U$ has to be unitarily equivalent to a simple unilateral shift operator.

On the other hand, let us fix a two-sided sequence of vertices: $\{u_l\}_{l=-\infty}^\infty$ such that $\Par(u_l)=u_{l-1}$ is valid for every $l\in\Z$, and set the following weights:
\[ 
\gamma_v := \left\{\begin{matrix}
\frac{1}{\sqrt{2}} & \text{if } v\in V\setminus \left(\cup_{l=-\infty}^\infty \Chi(u_l)\right), \\
\exp{\frac{-1}{(|l|+1)^2}} & \text{if } v = u_l \text{ for some } l\in\Z, \\
1-\gamma_{u_l}^2 & \text{if } v \in \Chi(u_{l-1})\setminus\{u_l\} \text{ for some } l\in\Z,
\end{matrix} \right..
\]
This clearly defines an isometry $S_{\boldsymbol{\gamma}}$. Since
\[ \sum_{v\in\Gen_{\irT}(u_l)}\prod_{j=0}^\infty\gamma_{\Par^j(v)}^2 \geq \prod_{j=0}^\infty\gamma_{u_{l-j}}^2 = \exp{\left(2\sum_{j=0}^\infty \frac{-1}{(|l-j|+1)^2}\right)} > 0 \quad (l\in\Z), \]
the weighted shift isometry $S_{\boldsymbol{\gamma}}$ is not c.n.u.
\end{rem}

The above points show that two unitarily equivalent weighted shifts on directed trees can be defined on a very different directed tree. 
We close this section by calculating the isometric asymptote of the adjoint $\Sl^*$. 
Namely, we compute the unique isometry $U_*\in\irB((\irH_0^*)^\perp)$ which satisfies the equation $A_*^{1/2}\Sl^* = U_*A_*^{1/2}$.

\begin{thm} \label{dual_isom_as_thm}
Suppose that the contractive weighted shift $\Sl$ on $\irT$ is not of class $C_{\cdot 0}$. 
Then $\irT$ has no root and the isometry $U_*$ acts as follows:
\[ U_* h_{u} = \frac{\sqrt{a_u}}{\sqrt{a_{\Par(u)}}} \cdot h_{\Par(u)} \quad (u\in V), \]
where $0\neq h_u\in\ell^2(V)$ and $a_u\in(0,1]$ are as in Proposition \ref{dual_aslim_prop}. 
As a matter of fact, $U_*$ is unitarily equivalent to a simple unilateral shift operator if there is a last level (i.e. $\Chi(\Gen(u))=\emptyset$ for some $u\in V$), and to a simple bilateral shift operator otherwise.
\end{thm}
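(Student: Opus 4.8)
The hypothesis $\Sl\notin C_{\cdot 0}$ is equivalent to $A_*\neq 0$, i.e. to $\irH_0^*\neq\ell^2(V)$. By Theorem~\ref{dual_aslim_thm}(i) a rooted tree would force $\Sl\in C_{\cdot 0}$, so $\irT$ must be rootless; and by the final assertion of Theorem~\ref{dual_aslim_thm}(ii), $A_*\neq 0$ makes every $h_u$ nonzero, i.e. $a_u=\|h_u\|^2>0$ for all $u$. Since $h_u$ is supported on $\Gen(u)$, distinct levels give orthogonal vectors while $h_u=h_v$ whenever $u,v$ lie in the same level; hence, after normalization, these vectors --- one per level --- form an orthonormal basis of $(\irH_0^*)^\perp=\vee\{h_u\colon u\in V\}$. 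Thus $U_*$ is completely determined by its action on the $h_u$, and I would read this action off the defining intertwining relation $A_*^{1/2}\Sl^*=U_*A_*^{1/2}$.

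The plan is to exploit that each $h_u$ is an eigenvector of $A_*^{1/2}$. Since $A_*h_u=a_uh_u$ with $a_u>0$, we have $A_*^{1/2}h_u=\sqrt{a_u}\,h_u$, so $h_u=a_u^{-1/2}A_*^{1/2}h_u$ and the intertwining relation gives
\[ U_*h_u=\tfrac{1}{\sqrt{a_u}}\,U_*A_*^{1/2}h_u=\tfrac{1}{\sqrt{a_u}}\,A_*^{1/2}\Sl^*h_u. \]
It then remains to evaluate $A_*^{1/2}\Sl^*h_u$. Writing the local abbreviation $\pi_v:=\prod_{j=0}^{\infty}\lambda_{\Par^j(v)}$ and using $\Sl^*e_v=\lambda_ve_{\Par(v)}$, the vector $\Sl^*h_u$ is supported on the single level $\Gen(\Par(u))$. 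On that level $A_*$ is rank one: from $A_*e_w=\pi_wh_{\Par(u)}$ for $w\in\Gen(\Par(u))$ one sees that $\ell^2(\Gen(\Par(u)))$ reduces the self-adjoint $A_*$ and that $A_*$ restricted there equals $\langle\,\cdot\,,h_{\Par(u)}\rangle h_{\Par(u)}$; consequently $A_*^{1/2}$ restricted there is $a_{\Par(u)}^{-1/2}\langle\,\cdot\,,h_{\Par(u)}\rangle h_{\Par(u)}$. The one computation that makes everything collapse is the identity $\Sl h_{\Par(u)}=h_u$, which follows from $\pi_v=\lambda_v\pi_{\Par(v)}$ by regrouping the defining sum over children. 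Hence $\langle\Sl^*h_u,h_{\Par(u)}\rangle=\langle h_u,\Sl h_{\Par(u)}\rangle=\langle h_u,h_u\rangle=a_u$, so $A_*^{1/2}\Sl^*h_u=\frac{a_u}{\sqrt{a_{\Par(u)}}}h_{\Par(u)}$, which yields the claimed formula $U_*h_u=\frac{\sqrt{a_u}}{\sqrt{a_{\Par(u)}}}h_{\Par(u)}$.

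For the structural dichotomy I would pass to the orthonormal basis of normalized $h_u$'s, indexed by the levels of $\irT$. Indexing levels by integers so that children sit one index higher than their parent and writing $g_k:=h_u/\sqrt{a_u}$ for $u$ in level $k$ (well defined, since $h_u$ and $a_u$ depend only on the level), the formula just proved reads $U_*g_k=g_{k-1}$. Rootlessness means every vertex has a parent, so the index set is unbounded in the parent (decreasing) direction. If there is a last level, i.e. $\Chi(\Gen(u))=\emptyset$ for some $u$, then the index set is bounded above by some $k_0$ and is therefore a one-sided sequence; relabeling it by $f_n=g_{k_0-n}$ ($n\geq 0$) turns $U_*$ into $f_n\mapsto f_{n+1}$, a simple unilateral shift of multiplicity one. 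If no level is last, then every level has children, the index set is all of $\Z$, and $U_*g_k=g_{k-1}$ is a simple bilateral shift.

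The only genuinely delicate point is passing from $A_*$ to $A_*^{1/2}$ on individual vectors: because $A_*^{1/2}$ is not diagonal in the $e_v$ basis, one must first observe that each level $\ell^2(\Gen(u))$ reduces $A_*$ and that $A_*$ is rank one there, so that the square root can be written down explicitly. Everything else is bookkeeping, with the identity $\Sl h_{\Par(u)}=h_u$ (equivalently $\pi_v=\lambda_v\pi_{\Par(v)}$) doing the real work; and the unilateral-versus-bilateral split is a direct reading of the level index set against rootlessness and the existence of a bottom level.
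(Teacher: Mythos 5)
Your proposal is correct and takes essentially the same route as the paper: both read $U_*h_u$ off the intertwining relation $A_*^{1/2}\Sl^*=U_*A_*^{1/2}$ using the eigenvector identity $A_*^{1/2}h_u=\sqrt{a_u}\,h_u$ and the rank-one action of $A_*$ on each level $\ell^2(\Gen(u))$, and both then identify $U_*$ as a simple uni- or bilateral shift on the orthonormal system of normalized $h_u$'s indexed by levels. Your packaging of the final sum as $\langle \Sl^*h_u,h_{\Par(u)}\rangle=\langle h_u,\Sl h_{\Par(u)}\rangle=a_u$ via the identity $\Sl h_{\Par(u)}=h_u$ is just a tidy reorganization of the termwise computation $\sum_{v\in\Gen(u)}\pi_v\lambda_v\pi_{\Par(v)}=a_u$ that the paper carries out explicitly.
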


\begin{proof}
If $\Sl\notin C_{\cdot 0}(\ell^2(V))$, then $h_u\neq 0$ and $a_u\neq 0$ $(u\in V)$. 
For any $u\in V$ we have the following equation:
\[ U_* \frac{1}{\sqrt{a_u}}h_{u} = \frac{1}{a_u} U_*A_*^{1/2}h_u = \frac{1}{a_u} A_*^{1/2}\Sl^* h_u \]
\[ = \frac{1}{a_u} \sum_{v\in\Gen(u)}\prod_{j=0}^\infty\lambda_{\Par^j(v)} A_*^{1/2}\Sl^* e_v = \frac{1}{a_u} \sum_{v\in\Gen(u)}\prod_{j=0}^\infty\lambda_{\Par^j(v)} \lambda_v A_*^{1/2}e_{\Par(v)} \]
\[ = \frac{1}{a_u}\sum_{v\in\Gen(u)}\prod_{j=0}^\infty\lambda_{\Par^j(v)}\lambda_v \frac{\langle e_{\Par(v)},h_{\Par(v)}\rangle }{\|h_{\Par(v)}\|^2} A_*^{1/2} h_{\Par(v)} \]
\[ = \frac{1}{a_u \sqrt{a_{\Par(u)}}}\bigg(\sum_{v\in\Gen(u)}\prod_{j=0}^\infty\lambda_{\Par^j(v)}^2 \bigg) h_{\Par(u)} = \frac{1}{\sqrt{a_{\Par(u)}}} h_{\Par(u)}.\]
One can easily see the unitary equivalence to the simple uni- or bilateral shift operator.
\end{proof}


\section{Cyclicity of weighted backward shift operators}

The aim of this section is to prove that a weighted backward shift operator of countable multiplicity is cyclic exactly when it has at most one zero weight. In the article \cite{chinese}, written in Chinese, there is a proof for the case when the multiplicity is one, but the author of the present paper was unable to read it due to the lack of proper translation. The reader can consider the forthcoming theorem as a generalization of that result. We note that the forthcoming proof was motivated by the solution of \cite[Problem 160]{Halmos}.

\begin{thm}\label{cyc_backward_thm}
Suppose that $\{e_{j,k}\colon j\in \irJ, k\in\N_0\}$ is an orthonormal basis in $\irH$ where $\irJ \neq \emptyset$ is a countable set and $\{w_{j,k}\colon j\in \irJ, k\in\N_0\}\subseteq [0,\infty)$ is a bounded set of weights. Consider the following weighted backward shift operator (of multiplicity $\#\irJ$):
\[ Be_{j,k} = \left\{ \begin{matrix}
0 & \text{ if } k=0 \\
w_{j,k-1}e_{j,k-1} & \text{ otherwise}
\end{matrix} \right.. \]
Then $B\in\irB(\irH)$ is cyclic if and only if there is at most one zero weight.
\end{thm}

\begin{proof}
Throughout the proof we may always assume without loss of generality that $0\leq w_{j,k}\leq 1$ holds for every $k\in\N_0$ and $j\in\irJ$. 

First we assume that $B$ has only positive weights. We take a vector of the following form:
\[ 
f = \sum_{l = 1}^\infty \xi_{j_l,k_l}\cdot e_{j_l,k_l} \in \irH, 
\]
such that $\xi_{j_l,k_l}>0$ $(l\in\N)$, $0 < k_{l+1}-k_l\nearrow\infty$ and for any $j\in\irJ$ infinitely many $l\in\N$ exist which satisfy $j_l=j$.

Our aim is to modify $f$ by decreasing its non-zero coordinates in a way that they remain positive and after the procedure we obtain a modification of $f$: $\tilde{f}\neq 0$ which is a cyclic vector of $B$. 
We have
\begin{equation} \label{for_cyclicity_eq} 
\begin{gathered}
\displaystyle \frac{1}{\xi_{j_m,k_m} w_{j_m,k_m-1}\dots w_{j_m,k_m-k}}B^k f = e_{j_m,k_m-k} \\
\displaystyle + \sum_{l > m} \frac{\xi_{j_l,k_l}w_{j_l,k_l-1}\dots w_{j_l,k_l-k}}{\xi_{j_m,k_m} w_{j_m,k_m-1}\dots w_{j_m,k_m-k}}\cdot e_{j_l,k_l-k} \quad (m\in\N, k_{m-1} < k \leq k_m)
\end{gathered}
\end{equation}
where we set $k_0 = -1$. 
We consider the quantity 
\begin{equation} \label{cyclic_eq}
\begin{gathered}
\Sigma_m :=
\max_{k_{m-1} < k \leq k_m} \left\{ \sum_{l > m} \bigg|\frac{1}{\xi_{j_m,k_m} w_{j_m,k_m-1}\dots w_{j_m,k_m-k}}B^k f - e_{j_m,k_m-k}\bigg|^2 \right\}\\
= \max_{k_{m-1} < k \leq k_m} \left\{ \sum_{l > m} \bigg|\frac{\xi_{j_l,k_l}w_{j_l,k_l-1}\dots w_{j_l,k_l-k}}{\xi_{j_m,k_m} w_{j_m,k_m-1}\dots w_{j_m,k_m-k}}\bigg|^2\right\}.
\end{gathered}
\end{equation}
Let us suppose for a moment that $\Sigma_m\leq (1/2)^m$ is satisfied for all $m\in\N$. 
In this case $e_{j,k} \in \irH_{B,f}$ would hold for every $j\in\irJ, k\in\N_0$, and thus $f$ would be a cyclic vector for $B$. 
Therefore our aim during the modification process is that this inequality will hold for the modified vector for every $m\in\N$.

If $\Sigma_1 > 1/2$, then let us change every $\xi_{j_l,k_l}$ to $\frac{\xi_{j_l,k_l}}{\sqrt{2\Sigma_1}}$ for every $l > 1$, otherwise we do not do anything. 
Then with these modified coordinates $\Sigma_1 \leq 1/2$ is fulfilled. 
If $\Sigma_2 > 1/4$, then we change every $\xi_{j_l,k_l}$ to $\frac{\xi_{j_l,k_l}}{\sqrt{4\Sigma_2}}$ for every $l > 2$, otherwise we do not modify anything. 
Then $\Sigma_1$ becomes less or equal than before and $\Sigma_2 \leq 1/4$ is satisfied \dots Suppose that we have already achieved $\Sigma_j \leq 1/2^j$ for every $1 \leq j \leq m-1$. 
In case when $\Sigma_m > 1/2^m$, we modify $\xi_{j_l,k_l}$ to $\frac{\xi_{j_l,k_l}}{\sqrt{2^m\Sigma_m}}$ for every $l > m$. 
Otherwise we do not change anything. 
Then $\Sigma_j$ becomes less or equal than before for every $1 \leq j \leq m-1$ and $\Sigma_m \leq 1/2^m$ \dots and so on. 
We notice that every coordinate was modified only finitely many times. 
Therefore this procedure gives us a new vector $\tilde f$ which satisfies $\Sigma_m\leq(1/2)^m$ ($m\in\N$). 
Therefore $\tilde f$ is cyclic for the injective weighted backward shift operator $B$.

Next, we proceed with the case when there is exactly one zero weight. 
Then $B$ is unitarily equivalent to $B'\oplus N$ where $B'\in\irB(\irH)$ is a weighted backward shift operator with positive weights and $N\in\irB(\C^n)$ is a cyclic nilpotent operator.
Since $B'$ has dense range and it is cyclic, (iii) of Lemma \ref{cyclic_denserenage_nilp_lem} gives us what we wanted.

Finally, the necessity is clear, since the co-dimension of $\ran(B)^-$ is at most one whenever $B$ is cyclic.
\end{proof}

We note that using the above method we can also prove the following: if $B$ is injective and there is a vector $g\in\cap_{n = 1}^\infty \ran(B^n)$ such that for every fixed index $j\in\irJ$ the condition $\langle g,e_{j,k}\rangle  \neq 0$ is fulfilled for infinitely many $k\in\N_0$, then there is a cyclic vector $f$ from the linear manifold $\cap_{n = 1}^\infty \ran(B^n)$.

We close this section with the following easy consequence.

\begin{cor}
If the operator $B$ defined in the previous theorem is a $C_{\cdot 1}$ contraction, then $B$ is a cyclic operator.
\end{cor}

\begin{proof}
Clearly, $B^*$ is injective which implies that every weight is non-zero.
\end{proof}


\section{Cyclicity of $\Sl$}

In this section we will deal with cyclic properties of the operator $\Sl$. 
From equation (\ref{co-rank_eq}) we infer that if $\irT$ has a root and $\Br(\irT)>0$, or $\irT$ is rootless and $\Br(\irT)>1$, then the weighted shift on $\irT$ has no cyclic vectors, since in this case the co-rank of $\Sl$ is greater than 1.

When $\Br(\irT)=0$, the operator $\Sl$ is either a cyclic nilpotent operator acting on a finite dimensional space or a weighted bilateral, unilateral or backward shift operator. 
We have dealt with the backward case. 
It is easy to see that a weighted unilateral shift operator is cyclic if and only if it is injective. 
For the weighted bilateral shift operator both can happen, as it was mentioned at the end of Section 2.

So the only interesting pure weighted shift on a directed tree case is when $\Br(\irT)=1$ and $\irT$ has no root.
Of course there are three different cases.
When $\irT$ has exactly two leaves, then obviously it can be represented by the following graph: $\irT^2_{k_0,j_0} := (V^2_{k_0,j_0},E^2_{k_0,j_0})$ where $1\leq k_0\leq j_0$, $V^2_{k_0,j_0} := \{\dots,-2,-1,0,1,\dots j_0\}\cup\{1',\dots k'_0\}$, and $E^2_{k_0,j_0} := \{(k,k+1)\colon k\in\Z, k< j_0\}\cup\{(0,1')\}\cup\{(k',(k+1)')\colon k\in\N, k<k_0\}$ (see Figure 1).
When $\irT$ has exactly one leaf, then it will be represented by the graph $\irT^1_{k_0} := (V^1_{k_0},E^1_{k_0})$ where $k_0\in\N$, $V^1_{k_0} := \Z\cup\{1',\dots k'_0\}$, and $E^1_{k_0} := \{(k,k+1)\colon k\in\Z\}\cup\{(0,1')\}\cup\{(k',(k+1)')\colon k\in\N, k<k_0\}$ (see Figure 1).
Finally, when $\irT$ has no leaf, then it will be represented by the following graph: $\irT^0 := (V^0,E^0)$ where $V^0 := \Z\cup\{k'\colon k\in\N\}$, and $E^0 := \{(k,k+1)\colon k\in\Z\}\cup\{(0,1')\}\cup\{(k',(k+1)')\colon k\in\N\}$. In the next two lemmas our aim is to find a weighted bilateral/backward shift operator and a cyclic nilpotent operator such that their orthogonal sum is similar to $\Sl$. In order to do this, we will construct a bounded invertible operator which intertwines them.

\begin{figure} \label{sim_lem_pic}
\centering
\includegraphics[scale=0.4]{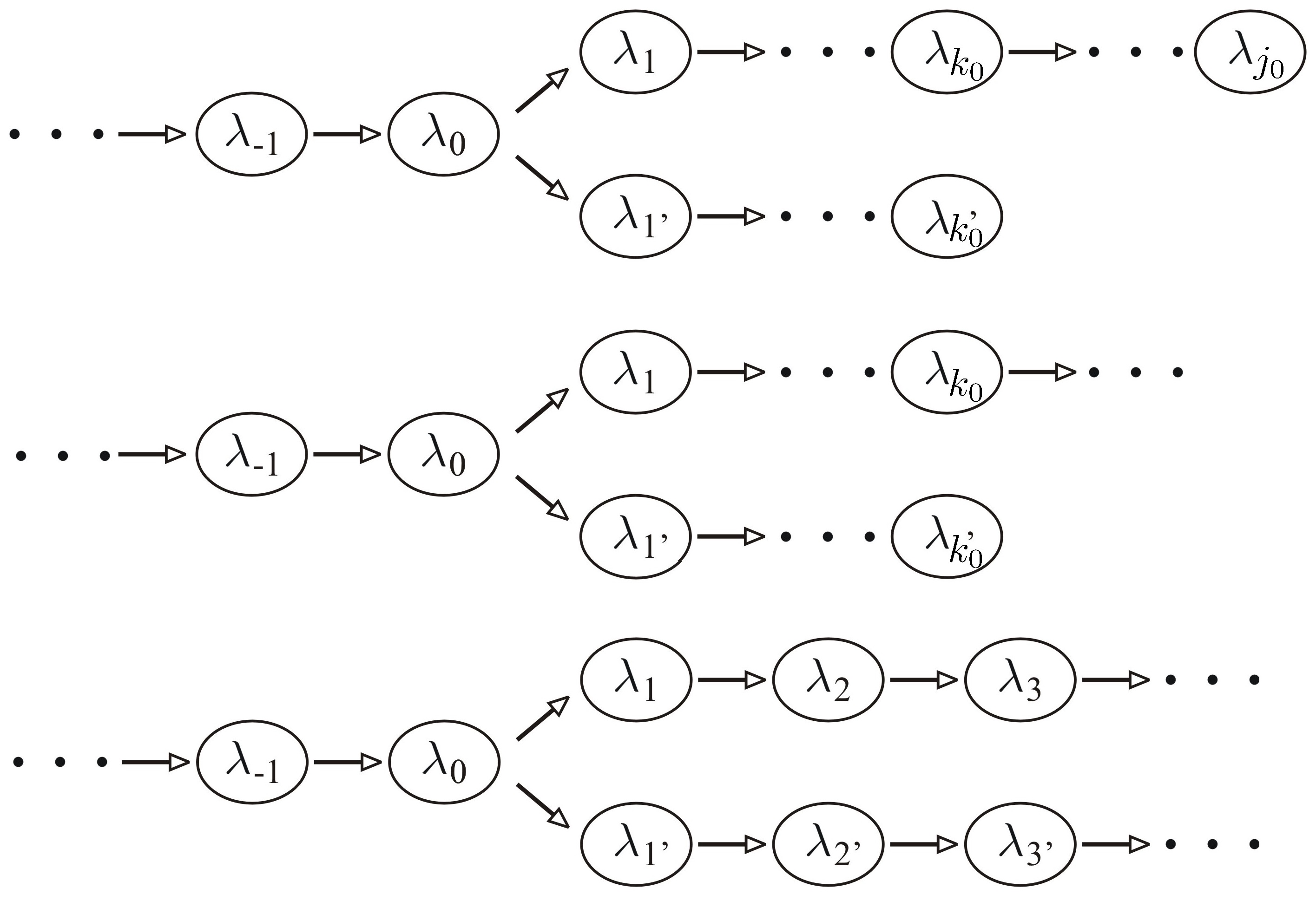}
\caption{Weighted shifts defined on $\irT^2_{k_0,j_0}$, $\irT^1_{k_0}$ and $\irT^0$.}
\end{figure}

\begin{lem} \label{br2_leaf1_similarity_lem}
Consider a bounded weighted shift operator $\Sl$ on $\irT^1_{k_0}$. 
Then $\Sl$ is similar to an orthogonal sum $W\oplus N$ where $W\in\irB(\ell^2(\Z))$ is the weighted bilateral shift operator: $We_k = \lambda_{k+1}e_{k+1}$ $(k\in\Z)$, and $N$ is a cyclic nilpotent operator acting on a finite dimensional space.
\end{lem}

\begin{proof}
Let us define the subspaces $\irE := \ell^2(\Z)$, $\irE' := \irE^\perp = \ell^2(\{1',2',\dots k_0'\})$, and the vectors $g_k := \prod_{j=1}^k \frac{1}{\lambda_j}\cdot e_k - \prod_{j=1}^k \frac{1}{\lambda_{j'}}\cdot e_{k'}$ $(1\leq k\leq k_0)$. 
We consider the weighted bilateral shift operator $W\in\irB(\irE),\; We_k = \lambda_{k+1}e_{k+1}$ $(k\in\Z)$, and the cycllic nilpotent operator
\[ 
N\in \irB(\irE'), \quad e_{k'}\mapsto 
\left\{ \begin{matrix}
\frac{\|g_{k}\|}{\|g_{k+1}\|}e_{(k+1)'} & \text{ if } 1\leq k < k_0\\
0 & \text{ if } k = k_0
\end{matrix}\right.. 
\]
The operator
\[ X \colon \ell^2(V^1_{k_0}) \to \ell^2(V^1_{k_0}), \quad e_{k'} \mapsto \frac{1}{\|g_k\|}g_k \; (1\leq k\leq k_0), \; e_n\mapsto e_n \; (n\in\Z) \]
is trivially bounded. 
Since $\vee\{e_k, e_{k'}\}$ is invariant for $X$ $(1 \leq k \leq k_0)$, and $e_n$ is an eigenvector $(n \in\Z)$, the invertibility of $X$ is also obvious. 
The following observations show that $X(W\oplus N)^* = \Sl^* X$ is satisfied as well:
\[ 
X(W\oplus N)^* e_n = X \lambda_{n} e_{n-1} = \lambda_{n} e_{n-1} = \Sl^* e_n = \Sl^* X e_n \quad (n \in \Z),
\]
\[ 
X(W\oplus N)^* e_{1'} = 0 = \Sl^* \Big(\frac{1}{\|g_1\|}g_1\Big) = \Sl^* X e_{1'},
\]
\[ 
X(W\oplus N)^* e_{k'} = X \frac{\|g_{k-1}\|}{\|g_{k}\|} e_{(k-1)'} = \frac{1}{\|g_{k}\|} g_{k-1} 
\]
\[ 
= \Sl^* \Big(\frac{1}{\|g_k\|}g_k\Big) = \Sl^* X e_{k'} \quad (2\leq k\leq k_0). 
\]
Therefore the invertible operator $X^*$ intertwines $\Sl$ with $W\oplus N$, which ends the proof.
\end{proof}

The verification of the next lemma is quite the same as the proof of the above lemma. Therefore we omit its proof.

\begin{lem} \label{br2_leaf2_similarity_lem}
Consider a bounded weighted shift $\Sl$ on $\irT^2_{k_0,j_0}$. 
Then $\Sl$ is similar to an orthogonal sum $B\oplus N$ where $B\in\irB(\ell^2(\Z\cap (-\infty,j_0])$ is the weighted backward shift operator: 
\[
Be_k = \left\{
\begin{matrix}
\lambda_{k+1}e_{k+1} & \text{if } k < j_0\\
0 & \text{if } k = j_0
\end{matrix}
\right.,
\] 
and $N$ is a cyclic nilpotent operator acting on a finite dimensional space.
\end{lem}

The next theorem is a trivial consequence of Lemma \ref{cyclic_denserenage_nilp_lem} and \ref{br2_leaf2_similarity_lem}, and Theorem \ref{cyc_backward_thm}, so we omit its proof.

\begin{thm}
Every bounded weighted shift operator $\Sl$ on $\irT^2_{k_0,j_0}$ is cyclic.
\end{thm}

Now, let $\T$ denote the complex unit circle, $\irL$ the $\sigma$-algebra of Lebesgue measurable sets on $\T$, $m$ the normalized Lebesgue measure on $\T$, and $L^2$ the space $L^2 = L^2 (\T,\irL,m)$. The simple bilateral shift operator (of multiplicity one) $S$ can be represented as a multiplication operator by the identity function $\chi(\zeta) = \zeta$ on $L^2$. It is a known fact that $g\in L^2$ is cyclic for $S\in\irB(L^2)$ if and only if $g(\zeta) \neq 0$ a.e. $\zeta\in\T$ and $\int_\T \log|g| \,dm =-\infty$.

In the next Theorem, we characterize cyclicity of $\Sl$ on $\irT^1_{k_0}$.

\begin{thm}\label{1leaf_cyc_char_thm}
A bounded weighted shift $\Sl$ on $\irT^1_{k_0}$ is cyclic if and only if the weighted bilateral shift operator $W\in\irB(\ell^2(\Z))$, $We_k = \lambda_{k+1} e_{k+1} \; (k\in\Z)$ is cyclic. In particular, if $\Sl$ is contractive and $\Sl\notin C_{\cdot 0}(\ell^2(V))$, then $\Sl$ is cyclic.
\end{thm}

\begin{proof}
By Lemma \ref{br2_leaf1_similarity_lem}, $\Sl$ is similar to $W\oplus N$. If $W$ has no cyclic vectors, then obviously neither has $\Sl$. If $W$ is cyclic, then by Lemma \ref{cyclic_denserenage_nilp_lem} we can obviously see that $\Sl$ has a cyclic vector. Since $C_{\cdot 1}$-class weighted bilateral shift operators are cyclic, the other statement follows immediately.
\end{proof}

The simple unilateral shift operator $S^+$ can also be represented as a multiplication operator by $\chi$, but on the Hardy space $H^2 = \vee\{\chi^n\colon n\in\N_0\} \subseteq L^2$. 
A function $g\in H^2$ is cyclic for $S^+\in\irB(H^2)$ if and only if $g$ is an outer function. 
We proceed with verifying that the orthogonal sum $S\oplus S^+$ has no cyclic vectors. 
This needs only elementary Hardy space techniques.

\begin{prop}\label{SoplusS+_prop}
The operator $S\oplus S^+ \in \irB(L^2\oplus H^2)$ has no cyclic vectors.
\end{prop}

\begin{proof}
Suppose that $f\oplus g\in L^2\oplus H^2$ is a cyclic vector, and let us denote the orthogonal projection onto $L^2\oplus \{0\}$ by $P_1$. Then $\vee\{\chi^nf\colon n\in\N_0\} = P_1(\vee\{\chi^nf\oplus\chi^ng\colon n\in\N_0\})$ is dense in $L^2$ i.e.: $f$ is cyclic for $S$. Similarly we get that $g$ is cyclic for $S^+$. This implies that $f(\zeta)\neq 0$ for a.e. $\zeta\in\T$ and $g$ is an outer function. We show that $0\oplus g\notin (L^2\oplus H^2)_{S\oplus S^+,f\oplus g}$. To see this consider an arbitrary complex polynomial $p$. Then we have
\[ \|(pf)\oplus (pg) - 0\oplus g\|^2 = \int_\T |pf|^2+|(p-1)g|^2 \, dm. \]

One of the sets $A=p^{-1}(\{z\in\T\colon\re{z}<1/2\})$ or $\T\setminus A = p^{-1}(\{z\in\T\colon\re{z}\geq 1/2\})$ has Lebesgue measure at least 1/2. If $m(A)\geq 1/2$ is satisfied, then
\[ \|(pf)\oplus (pg) - 0\oplus g\|^2 \geq \int_A |(p-1)g|^2 \, dm = \int_A |g|^2/4 \, dm \]
\[ \geq \frac{1}{4}\inf\left\{\int_E |g|^2 \, dm\colon E\in\irL, m(E)\geq 1/2 \right\} > 0 \quad (p\in\irP_\C).\]
Similarly if $m(\T\setminus A)\geq 1/2$, then
\[ \|(pf)\oplus (pg) - 0\oplus g\|^2 \geq \frac{1}{4}\inf\left\{\int_E |f|^2 \, dm\colon E\in\irL, m(E)\geq 1/2 \right\} > 0 \; (p\in\irP_\C).\]
These imply that $S\oplus S^+$ has no cyclic vectors.
\end{proof}

Now we are in a position to prove a non-cyclicity theorem.

\begin{thm}\label{T0_nocyc}
If the contractive weighted shift $\Sl$ on $\irT^0$ is of class $C_{1\cdot}$, then it has no cyclic vectors.
\end{thm}

\begin{proof} 
By Theorem \ref{isom_as_thm}, the isometric asymptote $U$ of $\Sl$ is unitarily equivalent to the orthogonal sum $S\oplus S^+$ which has no cyclic vectors by Proposition \ref{SoplusS+_prop}. This implies - together with (i) of Lemma \ref{is_as_forcyclem} - that neither has $\Sl$.
\end{proof}

We note that there exists a weighted shift operator on $\irT^0$ which is cyclic. This will be proven in the last section.


\section{Cyclicity of $\Sl^*$}

In this section we are interested in giving necessary conditions for $\Sl^*$ to be cyclic. 
Unlike in Section 6, here we do not have any restrictions on the structure of the directed tree $\irT$. 
In fact, we will see that there are several directed trees, other than $\irT^0$, $\irT^1_{k_0}$ or $\irT^2_{k_0,j_0}$, on which we can define a weighted shift $\Sl$ such that $\Sl^*$ is cyclic.
Let us denote the operator $\underbrace{S^+\oplus \dots \oplus S^+}_{k \text{ times}}$ by $S^+_k$ ($k\in\N$) and the orthogonal sum of $\aleph_0$ copies of $S^+$ by $S^+_{\aleph_0}$.
Now we prove the cyclicity of $S\oplus (S^+_k)^*$ when $k\in\N$.

\begin{thm}\label{cyclic_ort_sum_thm}
The operator $S\oplus (S^+_k)^*$ is cyclic for every $k\in\N$.
\end{thm}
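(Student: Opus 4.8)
The plan is to exhibit one explicit cyclic vector after passing to a function-space model, exploiting that $S$ is unitary while $(S^+_k)^*$ is a \emph{stable} (backward) shift. First I would use the flip $e_n\mapsto e_{-n}$, which conjugates $S$ into $S^*$, so that $S\oplus(S^+_k)^*$ is unitarily equivalent to $S^*\oplus(S^+_k)^*$; writing $S^*=M_{\overline\chi}$ on $L^2$ and $B:=(S^+_k)^*$ for the backward shift on $H^2\otimes\C^k$, it suffices to find $\phi\oplus\Psi\in L^2\oplus(H^2\otimes\C^k)$ cyclic for $S^*\oplus B$. I would fix, once and for all, a $\phi$ that is cyclic for $S^*$ (e.g. $\phi\neq0$ a.e. with $\int_\T\log|\phi|\,dm=-\infty$) and keep the coordinate magnitudes of $\Psi$ free, to be pinned down at the very end.

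Put $M:=\vee\{(S^*\oplus B)^p(\phi\oplus\Psi):p\in\Z_+\}$, so that $q(S^*\oplus B)(\phi\oplus\Psi)=q(\overline\chi)\phi\oplus q(B)\Psi\in M$ for every $q\in\irP_\C$. The computation I rely on is the identity $q(B)\Psi=P_+\big(\Psi\,\overline{q(\chi)}\big)$, where $P_+$ is the orthogonal projection of $L^2$ onto $H^2$ applied coordinatewise; in particular, for $q(z)=z^N\tilde q(z)$ one has $q(B)\Psi=B^N\tilde q(B)\Psi$, and since $B$ is a contraction with $\|B^m\Psi\|=\varepsilon_m\downarrow0$ (the $\ell^2$-tail of the coefficients of $\Psi$) I get the bound $\|q(B)\Psi\|\le \varepsilon_N\sum_j|\tilde q_j|$. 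Now fix a target $\psi_0^{(r)}$ from a countable dense subset of $L^2$. Because $\phi$ is cyclic for $S^*$ and $\overline\chi$ is unitary, the set $\{z^N\tilde q(\overline\chi)\phi:\tilde q\in\irP_\C\}=\overline\chi^{\,N}\vee\{\overline\chi^{\,j}\phi\}$ is dense in $L^2$ for every fixed $N$; hence I can choose polynomials $\tilde q^{(N)}$ with $\|z^N\tilde q^{(N)}(\overline\chi)\phi-\psi_0^{(r)}\|<1/N$, and record the finite cost $C_r(N)=\sum_j|\tilde q^{(N)}_j|$.

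Having fixed $\phi$ and the dense family of targets, the numbers $C_r(N)$ are determined, so there is no circularity in now choosing $\Psi$ cyclic for $B$ (Theorem \ref{cyc_backward_thm}) whose coefficients decay so fast that $\varepsilon_N\max_{r\le N}C_r(N)\to0$; Theorem \ref{cyc_backward_thm} leaves the magnitudes of the coordinates completely free, so arbitrarily fast decay is compatible with cyclicity. With such $\Psi$, setting $q_N=z^N\tilde q^{(N)}$ gives $q_N(\overline\chi)\phi\to\psi_0^{(r)}$ while $\|q_N(B)\Psi\|\le\varepsilon_N C_r(N)\to0$, whence $\psi_0^{(r)}\oplus0\in M$. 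As the targets are dense, $L^2\oplus0\subseteq M$; subtracting $\phi\oplus0\in M$ from $\phi\oplus\Psi\in M$ yields $0\oplus\Psi\in M$, and applying powers of $B$ together with the cyclicity of $\Psi$ for $B$ gives $0\oplus(H^2\otimes\C^k)\subseteq M$. Therefore $M=L^2\oplus(H^2\otimes\C^k)$, i.e. $\phi\oplus\Psi$ is cyclic, and by the initial unitary equivalence so is $S\oplus(S^+_k)^*$.

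The \emph{main obstacle} is exactly the interaction of the two summands: the bilateral part $q(\overline\chi)\phi$ never shrinks (multiplication by $\overline\chi^{\,N}$ is norm-preserving), so one cannot approximate a prescribed $L^2$-vector while keeping the backward part $q(B)\Psi$ small — unless one routes the approximation through high powers $z^N$ and uses the stability $B^m\to0$. Balancing the fixed approximation cost $C_r(N)$ against a sufficiently fast coordinate decay of $\Psi$ (the diagonal choice above) is the crux, and it is precisely where the backward, $C_{0\cdot}$ nature of $(S^+_k)^*$ enters; the same device fails for the isometry $S^+$, in agreement with the non-cyclicity of $S\oplus S^+$ established above.
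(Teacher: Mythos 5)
Your proof is correct, but it follows a genuinely different route from the paper's. The paper argues by intertwining: it constructs an injective operator $X\in\irB(L^2\oplus H^2,L^2)$, $X(f\oplus g)=f\varphi+g$ with $\varphi$ nonvanishing a.e.\ and $\int_\T\log|\varphi|\,dm=-\infty$ (injectivity coming from the same log-integral dichotomy), satisfying $SX=X(S\oplus S^+)$; taking adjoints yields a dense-range map intertwining $S^*$ with $S^*\oplus(S^+)^*$, so Lemma \ref{is_as_forcyclem}(i) transfers a cyclic vector of $S^*$, and the case $k>1$ is handled by an induction that absorbs one $H^2$ summand at a time. You instead build a cyclic vector $\phi\oplus\Psi$ by hand, exploiting the $C_{0\cdot}$ nature of $(S^+_k)^*$: routing the approximation of $L^2$-targets through polynomials $z^N\tilde q(z)$ so that the bilateral component still reaches the target (multiplication by $\overline\chi^N$ being unitary) while the backward component is crushed by $\|B^N\Psi\|=\varepsilon_N\downarrow 0$, with $\Psi$ chosen via Theorem \ref{cyc_backward_thm} to have tail norms decaying faster than the recorded approximation costs $C_r(N)$ grow. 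The bookkeeping is sound: the costs are fixed before $\Psi$ is chosen, so there is no circularity, and Theorem \ref{cyc_backward_thm} indeed permits arbitrarily small (hence arbitrarily fast-decaying) coordinates. The paper's argument is shorter and more conceptual; yours is more explicit, makes visible exactly why the same device must fail for $S\oplus S^+$, and --- notably --- has no step that degrades as $k$ grows: the only place $k$ enters is the appeal to Theorem \ref{cyc_backward_thm}, which is stated for countable multiplicity. Your argument therefore appears to apply verbatim to $S\oplus(S^+_{\aleph_0})^*$, which would answer affirmatively the Question the paper raises immediately after this theorem; this is worth writing out carefully as it would go beyond the paper's result.
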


\begin{proof}
The method is the following: we intertwine $S\oplus S_k^+$ and $S$ with an injective operator $X\in\irB(L^2\oplus H^2,L^2)$: $SX = X(S\oplus S_k^+)$. Then taking the adjoint of both sides in the equation: $(S^*\oplus (S_k^+)^*)X^* = X^*S^*$, $X^*$ has dense range and $S^*$ is cyclic. This implies the cyclicity of $S^*\oplus (S_k^+)^*$ for any $k \in \N$ by (i) of Lemma \ref{cyclic_denserenage_nilp_lem}, which is unitarily equivalent to $S\oplus (S_k^+)^*$. 

For the $k=1$ case the definition of the operator $X$ is the following:
\[ X\colon L^2\oplus H^2 \to L^2, \quad f\oplus g\mapsto f\varphi + g, \]
where $\varphi\in L^\infty$, $\varphi(\zeta)\neq 0$ for a.e. $\zeta\in\T$ and $\int_\T \log|\varphi(\zeta)| \, d\zeta = -\infty$. An easy estimate shows that $X\in\irB(L^2\oplus H^2,L^2)$. Assume that $0 = f\varphi + g$. On the one hand if $f = 0$ ($g=0$, resp.), then $g = 0$ ($f=0$, resp.) follows immediately. On the other hand, taking logarithms of the absolute values and integrating over $\T$ we get 
\[-\infty < \int_\T \log|g| \, dm = \int_\T \log|f| + \log|\varphi| \, dm \leq \int_\T |f| \, dm + \int_\T \log|\varphi| \, dm = -\infty, \]
which is a contradiction. Therefore $X$ is injective. The equation $SX = X(S\oplus S^+)$ is trivial, thus $S\oplus (S^+)^*$ is indeed cyclic.

Now, let us turn to the case when $k>1$. We will work with induction, so let us suppose that we have already proven the cyclicity of $S\oplus (S^+_{k-1})^*$ for some $k > 1$. Consider the following operator
\[ Y\colon L^2\oplus \underbrace{H^2 \oplus\dots\oplus H^2}_{k \text{ times}} \to L^2\oplus \underbrace{H^2 \oplus\dots\oplus H^2}_{k-1 \text{ times}}, \]
\[ f\oplus g_1\oplus \dots \oplus g_{k} \mapsto (f\varphi+g_1)\oplus g_2\oplus \dots \oplus g_{k}, \]
with the same $\varphi\in L^\infty$ as in the definition of $X$. Obviously $Y$ is bounded, linear and injective, and we have $Y(S\oplus S^+_{k}) = (S\oplus S^+_{k-1})Y$. This proves that $S\oplus (S^+_{k})^*$ is also cyclic.
\end{proof}

Of course, now a question arises naturally. It seems that the previous method does not work for the $k=\aleph_0$ case.

\begin{ques}\label{ques}
Is the operator $S\oplus (S^+_{\aleph_0})^*$ cyclic?
\end{ques}

If $\Sl$ is of class $C_{1\cdot}$, then in some cases we can prove that $\Sl^*$ is cyclic.

\begin{thm}\label{adjcyc}
The following conditions are valid:
\begin{itemize}
\item[\textup{(i)}] If $\irT$ has a root and the contractive weighted shift $\Sl$ on $\irT$ is of class $C_{1\cdot}$, then $\Sl^*$ is cyclic.
\item[\textup{(ii)}] If $\irT$ is rootless, $\Br(\irT)<\infty$ and the contractive weighted shift $\Sl$ on $\irT$ is of class $C_{1\cdot}$, then $\Sl^*$ is cyclic.
\end{itemize}
\end{thm}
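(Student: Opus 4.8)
The plan is to deduce the cyclicity of $\Sl^*$ from the cyclicity of $U^*$, where $(X,U)$ is the isometric asymptote of $\Sl$, by invoking point (iii) of Lemma \ref{is_as_forcyclem}: if $\Sl\in C_{1\cdot}$ and $U^*$ has a cyclic vector $g$, then $A^{1/2}g$ is cyclic for $\Sl^*$. The key preliminary observation is that the hypothesis $\Sl\in C_{1\cdot}(\ell^2(V))$ forces $\irH_0=\{0\}$, hence $\irH_0^\perp=\ell^2(V)$ and $V'=V$, so that $\irT'=\irT$. Consequently Theorem \ref{isom_as_thm} applies with $\irT'=\irT$ and identifies $U$ completely as an orthogonal sum of copies of $S^+$, possibly together with a single copy of $S$. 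Taking adjoints turns $U$ into an orthogonal sum of simple backward shifts, with at most one extra copy of $S^*\cong S$, and it remains only to check that such an operator is cyclic.

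For (i), since $\irT$ has a root the contraction is of class $C_{\cdot 0}$ and Theorem \ref{isom_as_thm}(i) shows that $U$ is unitarily equivalent to $\sum_{j=1}^{\Br(\irT)+1}\oplus S^+$. Thus $U^*$ is a weighted backward shift of countable multiplicity $\Br(\irT)+1$ all of whose weights equal $1$, and Theorem \ref{cyc_backward_thm} produces a cyclic vector for it --- the construction there allows an arbitrary countable index set, so no restriction on $\Br(\irT)$ is needed. Applying Lemma \ref{is_as_forcyclem}(iii) then yields that $\Sl^*$ is cyclic.

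For (ii), $\irT$ is rootless and Theorem \ref{isom_as_thm} offers two alternatives. If $U$ is a c.n.u. isometry, then $U\cong\sum_{j=1}^{\Br(\irT)}\oplus S^+$, so $U^*$ is again a backward shift with strictly positive weights, now of finite multiplicity $\Br(\irT)$, and hence cyclic by Theorem \ref{cyc_backward_thm}. If $U$ is not c.n.u., then $U\cong S\oplus\sum_{j=1}^{\Br(\irT)}\oplus S^+$, whence $U^*\cong S^*\oplus(S^+_{\Br(\irT)})^*\cong S\oplus(S^+_{\Br(\irT)})^*$, using that the simple bilateral shift is unitarily equivalent to its adjoint. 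This is exactly the operator treated in Theorem \ref{cyclic_ort_sum_thm}, which is cyclic for every finite multiplicity. In both subcases $U^*$ is cyclic, and Lemma \ref{is_as_forcyclem}(iii) again finishes the argument.

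The only delicate point is the non-c.n.u. subcase of (ii): this is precisely where finiteness of $\Br(\irT)$ is indispensable, because Theorem \ref{cyclic_ort_sum_thm} establishes cyclicity of $S\oplus(S^+_k)^*$ only for finite $k$, while the case $k=\aleph_0$ remains open (as recorded in the Question preceding this theorem). By contrast, the rooted case (i) never produces a bilateral summand --- the root forces $U$ to be a pure unilateral shift --- so there $U^*$ is a genuine backward shift whose cyclicity holds in arbitrary countable multiplicity, and no bound on the branching index is required.
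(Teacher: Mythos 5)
Your proof is correct and takes essentially the same route as the paper's, which is a one-line appeal to Lemma~\ref{is_as_forcyclem}, Theorem~\ref{isom_as_thm} and Theorem~\ref{cyclic_backward_thm_more} applied to the isometric asymptote. You are in fact more explicit than the paper: you separately invoke Theorem~\ref{cyclic_ort_sum_thm} for the non-c.n.u.\ rootless subcase (a citation the paper's proof omits) and correctly identify that subcase as the sole reason the hypothesis $\Br(\irT)<\infty$ is needed in (ii).
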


\begin{proof}
Obviously $\irT$ is leafless in both cases. We consider the isometric asymptote $U$ of $\Sl$ which was described in Theorem \ref{isom_as_thm}. Since $U^*$ is cyclic by Theorems \ref{cyc_backward_thm} and \ref{cyclic_ort_sum_thm}, the operator $\Sl^*$ is also cyclic by Lemma \ref{is_as_forcyclem}.
\end{proof}

If Question \ref{ques} had a positive answer, then in the previous theorem we would only have to assume that $\Sl\in C_{1\cdot}(\ell^2(V))$.
This would be a nice improvement.

To close this section we point out that the condition $\Sl\in C_{1\cdot}(\ell^2(V))$ in the above theorem is crucial. 
In fact, there exists a weighted shift operator on $\irT^0$ such that $\Sl^*$ is not cyclic (see the next section).


\section{Similarity of $\Sl$ to the orthogonal sum of a bi- and a unilateral shift operator}

In the last section we examine the case when $\Sl$ is a weighted shift on $\irT^0$. 
Here we will not assume contractivity of $\Sl$. 
First we will investigate when $\Sl$ is similar to an orthogonal sum of a bi- and a unilateral shift operator. 
Then, as a counterpart of Theorem \ref{T0_nocyc}, we construct a weighted shift operator on $\irT^0$ which is cyclic, and, as a counterpart of Theorem \ref{adjcyc}, we point out that there is another one such that its adjoint $\Sl^*$ has no cyclic vectors. 
We note that in \cite{tree-shift} the directed tree $\irT^0$ was denoted by $\irT_{2,\infty}$.

Let $\boldsymbol{w} = \{w_n\colon n\in\Z\}\cup\{w_{k'}\colon k\in\N\setminus\{1\}\} \subseteq (0,\infty)$ be bounded. 
We define the operator $\Ww\in\irB(\ell^2(V^0))$ by the following equations:
\[ \Ww e_{n} = w_{n+1} e_{n+1}, \qquad \Ww e_{k'} = w_{(k+1)'} e_{(k+1)'} \quad (n\in\Z, k\geq 1). \]
Obviously $\Ww$ is an orthogonal sum of a bi- and a unilateral shift operator.

Our aim is to find out whether there exists an operator $\Ww$ such that it is similar to $\Sl$. In order to do this, we will try to find a bounded invertible operator which intertwines $\Sl$ with a $\Ww$. 
We will use the following notations:
\begin{equation}\label{eq2}
g_k := \frac{1}{\prod_{j=1}^k\lambda_j}\cdot e_k - \frac{1}{\prod_{j=1}^k\lambda_{j'}}\cdot e_{k'}, \;\;
\widetilde{g}_k := \prod_{j=1}^k \lambda_j \cdot e_k + \prod_{j=1}^k \lambda_{j'} \cdot e_{k'} \;\; (k\in\N).
\end{equation}
We also set the following subspaces:
\[ \irE := \vee\{e_k\colon k\in\Z\}, \quad \irE' := \vee\{e_{k'}\colon k\in\N\}, \quad \irG := \vee\{g_k\colon k\in\N\}.\]
Clearly, we have $\irG^\perp = \vee\{\widetilde{g}_k, e_{1-k}\}_{k=1}^\infty$, $\irE^\perp = \irE'$.

\begin{lem}\label{dir_sum_lem}
The following two conditions are equivalent:
\begin{itemize}
\item[\textup{(i)}] the positive sequence $\left\{\prod_{j=1}^k \frac{\lambda_{j'}}{\lambda_j}\colon k\in\N\right\}$ is bounded,
\item[\textup{(ii)}] $\ell^2(V^0) = \irE\dotplus\irG$ (direct sum).
\end{itemize}
\end{lem}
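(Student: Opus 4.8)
The plan is to prove the equivalence of the two conditions by analyzing the structure of the vectors $g_k$ relative to the canonical basis, and understanding when the closed linear span $\irG = \vee\{g_k \colon k\in\N\}$ together with $\irE$ fills out all of $\ell^2(V)$ as a (topological) direct sum. The key observation is that $\ell^2(V) = \irE \oplus \irE'$ (orthogonally), so writing a general vector requires recovering its $\irE'$-component. Since each $g_k$ has a single nonzero coordinate $-\prod_{j=1}^k \frac{1}{\lambda_{j'}}$ in the $\irE'$-direction (namely along $e_{k'}$) together with a coordinate in $\irE$ (along $e_k$), the span $\irG$ projects onto $\irE'$ in a controlled way. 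Concretely, I would first record that $\ell^2(V) = \irE \dotplus \irG$ as a topological direct sum is equivalent to the statement that the oblique projection onto $\irE'$ along $\irE$, restricted to $\irG$, is a bounded linear isomorphism onto $\irE'$; this is where boundedness of $\big\{\prod_{j=1}^k \frac{\lambda_{j'}}{\lambda_j}\big\}$ must enter.

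First I would make the direct-sum condition precise. The algebraic sum $\irE + \irG$ is dense, because the normalized $g_k/\|g_k\|$ together with the $e_n$ ($n\in\Z$) already span a dense set: each $e_{k'}$ can be recovered from $g_k$ modulo $\irE$. The real content is that the sum be \emph{closed} and that $\irE \cap \irG = \{0\}$, equivalently that the decomposition $h = h_\irE \dotplus h_\irG$ depend boundedly on $h$. To test this I would introduce the orthogonal projection $P'$ onto $\irE'$ and examine its action on $\irG$. Observe that $P' g_k = -\prod_{j=1}^k \frac{1}{\lambda_{j'}}\, e_{k'}$, so the $g_k$ map to mutually orthogonal vectors along the $e_{k'}$, and the crucial quantity is the ratio of the full norm of $g_k$ to this projection. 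Since
\[ \|g_k\|^2 = \prod_{j=1}^k \frac{1}{\lambda_j^2} + \prod_{j=1}^k \frac{1}{\lambda_{j'}^2}, \qquad \|P' g_k\|^2 = \prod_{j=1}^k \frac{1}{\lambda_{j'}^2}, \]
the ratio $\|g_k\|^2/\|P' g_k\|^2 = 1 + \prod_{j=1}^k \frac{\lambda_{j'}^2}{\lambda_j^2}$ is bounded precisely when condition (i) holds.

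Next I would argue both implications from this computation. For (i) $\Rightarrow$ (ii): if the sequence $\prod_{j=1}^k \frac{\lambda_{j'}}{\lambda_j}$ is bounded, then $g_k$ and its $\irE'$-projection have comparable norms uniformly in $k$, so the map sending a finite combination $\sum c_k g_k$ to its $\irE'$-part is bounded below (using orthogonality of the $e_{k'}$) and bounded above, hence extends to an isomorphism of $\irG$ onto $\irE'$ that is the identity on $\irE'$ modulo $\irE$. This yields a bounded oblique projection onto $\irG$ along $\irE$, which forces the sum $\irE \dotplus \irG$ to be closed and direct, and by density equal to all of $\ell^2(V)$. For (ii) $\Rightarrow$ (i): if $\ell^2(V) = \irE \dotplus \irG$ is a topological direct sum, then the associated oblique projections are bounded, so in particular $e_{k'}$ has uniformly bounded $\irG$-component; writing $e_{k'}$ in terms of $g_k$ and an element of $\irE$ shows this component is a scalar multiple of $g_k$ whose norm-ratio is exactly $\|g_k\|/\|P'g_k\| = \sqrt{1 + \prod_{j=1}^k \frac{\lambda_{j'}^2}{\lambda_j^2}}$, and boundedness of this forces (i).

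The main obstacle I anticipate is handling the topological (as opposed to purely algebraic) nature of the direct sum rigorously: one must be careful to show that density of $\irE + \irG$ combined with the uniform norm comparison genuinely yields a \emph{closed} sum equal to the whole space, rather than merely a dense non-closed sum. The cleanest way around this is to exhibit the bounded oblique projection $Q$ onto $\irG$ along $\irE$ explicitly on the dense subset and verify $\|Q\| < \infty$ using the orthogonality of the $e_{k'}$ together with the boundedness in (i); once $Q$ is bounded, $I - Q$ is the complementary projection onto $\irE$, both ranges are automatically closed, and the topological direct-sum decomposition $\ell^2(V) = \irE \dotplus \irG$ follows. Conversely, the standard fact that in a topological direct sum the coordinate projections are bounded supplies the uniform bound needed for the reverse implication.
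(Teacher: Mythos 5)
Your proof is correct, and it hinges on the same numerical criterion as the paper's --- the ratio $\|g_k\|/\|P'g_k\| = \bigl(1+\prod_{j=1}^k(\lambda_{j'}/\lambda_j)^2\bigr)^{1/2}$ --- but it is organized quite differently. The paper argues coordinate-wise: given $x\in\ell^2(V)$ it solves explicitly for the (necessarily unique) candidate coefficients $\mu_k=\langle g,g_k\rangle$ and $\nu_k=\langle e,e_k\rangle$ of a decomposition $x=e+g$, observes that $\sum_k|\mu_k|^2/\|g_k\|^2<\infty$ holds automatically while $\sum_k|\nu_k|^2<\infty$ holds for every $x$ exactly when $\bigl\{\prod_{j=1}^k\lambda_{j'}/\lambda_j\bigr\}$ is bounded, and for the converse exhibits a concrete $x$ (with $\xi_{k_m'}=m^{-3/2}$) admitting no decomposition at all --- a purely algebraic failure. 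You instead show that the orthogonal projection $P'$ onto $\irE'$ restricted to $\irG$ is bounded below precisely under (i) (using the mutual orthogonality of the $g_k$ and of the $e_{k'}$), hence is an isomorphism onto $\irE'$, and assemble the bounded idempotent $Q=(P'|_{\irG})^{-1}P'$ with range $\irG$ and kernel $\irE$. Your route is cleaner on the topological side, since the bounded oblique projection is produced explicitly rather than recovered afterwards from the closed graph theorem; the paper's converse is marginally stronger in that it refutes even the algebraic decomposition, whereas your converse implicitly uses that an algebraic direct sum of two \emph{closed} subspaces filling the whole space forces the coordinate projections to be bounded --- valid here because $\irE$ and $\irG$ are closed by definition, but worth stating. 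Both arguments are complete and rest on the same computation of $\|g_k\|^2$.
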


\begin{proof}
It is clear that $\irE\cap\irG = \{0\}$. Therefore (ii) is equivalent to the condition $\irE+\irG = \ell^2(V)$. By \cite[Theorem 2.1]{CoMa} this holds if and only if 
\begin{equation}\label{eqM}
M := \sup\left\{|\langle \widetilde{g}, e' \rangle| \colon \widetilde{g} \in \irG^\perp, e'\in\irE', \|\widetilde{g}\| = \|e'\| = 1\right\} < 1.
\end{equation}
The general form of unit vectors in $\irG^\perp$ and $\irE'$ are the following:
\[
\widetilde{g} = \sum_{k=1}^\infty (\epsilon_{1-k} e_{1-k} + \gamma_k \tfrac{1}{\|\widetilde{g}_k\|}\widetilde{g}_k)\in\irG^\perp, \quad \sum_{k=1}^\infty (|\epsilon_{1-k}|^2 + |\gamma_k|^2) = 1,
\] 
and 
\[
e' = \sum_{k=1}^\infty \epsilon_{k'} e_{k'}\in\irE', \quad \sum_{k=1}^\infty |\epsilon_{k'}|^2 = 1.
\] 
We have
\[
|\langle\widetilde{g},e'\rangle| = 
\left|\sum_{k=1}^\infty \frac{\gamma_k \overline{\epsilon_{k'}}}{\|\widetilde{g}_k\|} \langle\widetilde{g}_k,e_{k'}\rangle\right| =
\]
\[
\left|\sum_{k=1}^\infty \frac{\gamma_k \overline{\epsilon_{k'}}}{\sqrt{\prod_{j=1}^k\lambda_j^2 + \prod_{j=1}^k\lambda_{j'}^2}} \prod_{j=1}^k\lambda_{j'}\right| =
\left|\sum_{k=1}^\infty \frac{\gamma_k \overline{\epsilon_{k'}}}{\sqrt{\left(\prod_{j=1}^k \frac{\lambda_j}{\lambda_{j'}}\right)^2 + 1}}\right|.
\]
Therefore it is straightforward that \eqref{eqM} is equivalent to (i). 
\end{proof}

\begin{figure}
\centering
\includegraphics[scale=0.4]{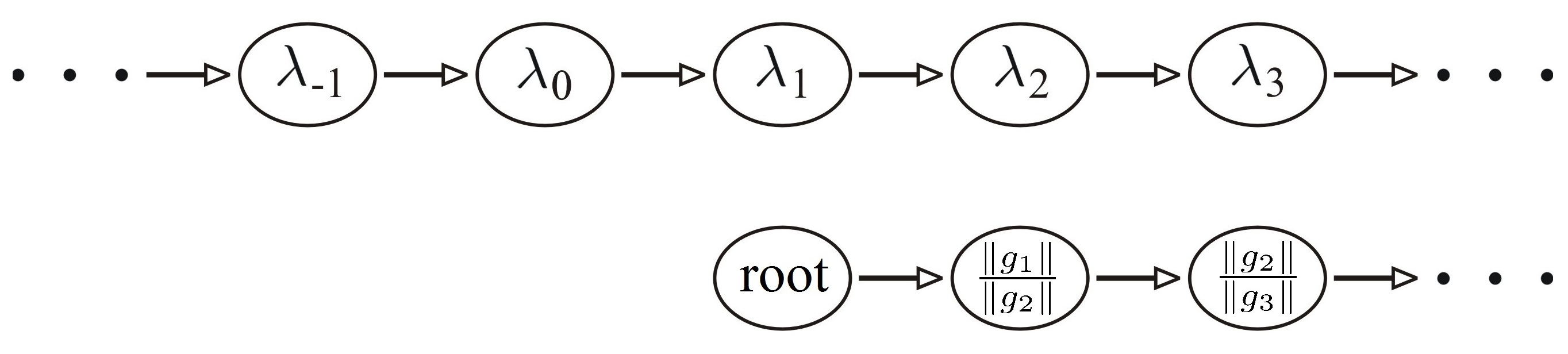}
\caption{In some cases $\Sl$ on $\irT^0$ is similar to an orthogonal sum.}
\end{figure}

Now, we are able to prove a similarity result. The operator $T_1\in\irB(\irH)$ is a \textit{quasiaffine transform} of $T_2\in\irB(\irK)$ if there exists a quasiaffinity (i.e.: which is injective and has dense range) $X\in\irB(\irH,\irK)$ such that $XT_1 = T_2X$. 

\begin{prop}\label{sim_br_1_prop}
Let $\Sl \in \irB(\ell^2(V^0))$ be a weighted shift on the directed tree $\irT^0$ and set
\[ w_n := \lambda_n \; (n\in\Z), \quad  w_{k'} := \frac{\|g_{k-1}\|}{\|g_k\|} \; (k>1), \]
$\boldsymbol{w} := \{w_n\colon n\in\Z\}\cup\{w_{k'}\colon k\in\N\setminus\{1\}\}$, where $g_k$ is as in \eqref{eq2}.
Then $\Ww\in\irB(\ell^2(V^0))$ and the following two points hold:
\begin{itemize}
\item[\textup{(i)}] $\Sl$ is always a quasiaffine transform of $\Ww$.
\item[\textup{(ii)}] If $\left\{\prod_{j=1}^k \frac{\lambda_{j'}}{\lambda_j}\colon k\in\N\right\}$ is bounded, then $\Sl$ is similar to $\Ww$.
\end{itemize}
\end{prop}

\begin{proof}
(i): Since $\Sl$ is bounded and $g_{k-1} = \Sl^*g_k$ $(k>1)$, we have $w_{k'} = \frac{\|g_{k-1}\|}{\|g_k\|} \leq \|\Sl^*\|$ $(k>1)$ and hence $\Ww$ is bounded. We define an operator $X$ by the equations
\[ X e_{k'} = \frac{1}{\|g_k\|} g_k, \quad X e_n = e_n \qquad (k\in\N, n\in\Z). \]
The operator $X$ is bounded and quasiaffine, because for every $k\in\N$ the subspace $\vee\{e_k,e_{k'}\}$ is invariant for $X$ and $\|X|\vee\{e_k,e_{k'}\}\| \leq 2$. The next equations show that $X$ intertwines $\Ww^*$ with $\Sl^*$:
\begin{equation}\label{eq3}
\begin{gathered}
\Sl^* X e_n = \Sl^* e_n = \lambda_n e_{n-1} = \lambda_n X e_{n-1} = X\Ww^* e_n \quad (n\in\Z), \\
\Sl^* X e_{k'} = \frac{1}{\|g_k\|} \Sl^* g_k = \left\{ \begin{matrix}
0 & k=1 \\
\frac{1}{\|g_k\|} g_{k-1} & k>1
\end{matrix} \right. = X\Ww^*e_{k'} \quad (k\in\N).
\end{gathered}
\end{equation}
This proves that $\Sl$ is indeed a quasiaffine transform of $\Ww$.

(ii): Clearly, the restrictions $X|\irE'\in\irB(\irE',\irG)$ and $X|\irE\in\irB(\irE,\irE)$ are bijective isometries. 
Since by Lemma \ref{dir_sum_lem} we have $\irE\dotplus\irG = \ell^2(V^0)$, and by definition $\irE\oplus\irE' = \ell^2(V^0)$, the operator $X$ is invertible.
Therefore by \eqref{eq3} we obtain that $\Sl$ is similar to $\Ww$.
\end{proof}

We have the following consequence.

\begin{cor}
If $\Sl \notin C_{0\cdot}(\ell^2(V^0))$ is a contractive weighted shift on the directed tree $\irT^0$, then it is similar to an orthogonal sum of a weighted bi- and a weighted unilateral shift operator.
\end{cor}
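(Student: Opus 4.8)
The plan is to read $\Sl\notin C_{0\cdot}(\ell^2(V))$ as a statement about two infinite products and then invoke Theorem \ref{sim_br_1_thm}, restoring its missing symmetry by a graph automorphism of $\widetilde{\irT}$ when necessary. Since $\Sl$ is a contraction every weight lies in $(0,1]$, so the decreasing sequences $p_k=\prod_{j=1}^{k}\lambda_j^2$ and $q_k=\prod_{j=1}^{k}\lambda_{j'}^2$ have limits $p_\infty\ge 0$ and $q_\infty\ge 0$. Recalling the structure of $\widetilde{\irT}$ used in this section (the branching vertex $0$ emitting the two forward rays $1,2,\dots$ and $1',2',\dots$, with $\dots,-2,-1$ the backward chain feeding into $0$), I would compute the eigenvalues $\alpha_u$ of the asymptotic limit $A$ via Theorem \ref{aslim_thm}: for large $n$ the set $\Chi^n(u)$ meets both forward rays, and summing $\prod_{j=0}^{n-1}\lambda_{\Par^j(v)}^2$ over $\Chi^n(u)$ shows that $\alpha_u$ is a strictly positive multiple of $p_\infty$ (for $u$ on the positive-integer ray), of $q_\infty$ (for $u$ on the primed ray), or of $p_\infty+q_\infty$ (for $u=0$ and for the backward chain). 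Hence $A\ne 0$, i.e. $\Sl\notin C_{0\cdot}(\ell^2(V))$, holds precisely when $p_\infty>0$ or $q_\infty>0$.

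If $p_\infty>0$ the conclusion is immediate: then $\prod_{j=1}^{k}(\lambda_{j'}/\lambda_j)^2=q_k/p_k\le 1/p_\infty$ for every $k$, so the sequence $\big\{\prod_{j=1}^{k}\lambda_{j'}/\lambda_j\big\}$ is bounded and Theorem \ref{sim_br_1_thm}(ii) yields that $\Sl$ is similar to $\widetilde{W}$, which by construction is the orthogonal sum of a weighted bilateral and a weighted unilateral shift.

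The genuine obstacle is the remaining case $q_\infty>0$, $p_\infty=0$, where $q_k/p_k\to\infty$, so the hypothesis of Theorem \ref{sim_br_1_thm}(ii) fails and the splitting $\ell^2(V)=\irE\dotplus\irG$ ceases to be a topological direct sum. I would overcome this by observing that Theorem \ref{sim_br_1_thm} is not symmetric in the two forward rays, whereas $\widetilde{\irT}$ is: the map $\sigma$ fixing every $n\le 0$ and interchanging $k\leftrightarrow k'$ for $k\ge 1$ is a graph automorphism, since it swaps the two edges leaving $0$ and matches the rays edge-for-edge. The induced unitary $U_\sigma e_v=e_{\sigma(v)}$ conjugates $\Sl$ into the weighted shift $\Sm$ on $\widetilde{\irT}$ with weights $\mu_k=\lambda_{k'}$, $\mu_{k'}=\lambda_k$ ($k\ge 1$) and $\mu_n=\lambda_n$ ($n\le 0$). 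For $\Sm$ one has $\prod_{j=1}^{k}\mu_j^2=q_k$, so its relevant limit is $q_\infty>0$ and the previous paragraph applies to $\Sm$, making it similar to an orthogonal sum of a weighted bilateral and a weighted unilateral shift. Since $\Sl$ is unitarily equivalent, hence similar, to $\Sm$, the same decomposition transfers to $\Sl$ and the corollary follows. I expect this symmetry reduction to be the only subtle step: one must notice that $p_\infty=0<q_\infty$ is not excluded by the hypothesis and that the asymmetry of Theorem \ref{sim_br_1_thm} must be repaired by the branch-swapping unitary before the theorem can be invoked; the rest is the bookkeeping for the $\alpha_u$ and a direct appeal to Theorem \ref{sim_br_1_thm}.
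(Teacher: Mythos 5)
Your argument is correct and is essentially the paper's own proof: the paper likewise identifies $\Sl\notin C_{0\cdot}$ with the positivity of $\prod_{j=1}^\infty\lambda_j$ or $\prod_{j=1}^\infty\lambda_{j'}$, interchanges $\lambda_j$ and $\lambda_{j'}$ if necessary so that the unprimed product is positive, observes that $\prod_{j=1}^k\lambda_{j'}/\lambda_j$ is then bounded, and invokes Theorem \ref{sim_br_1_thm}(ii). Your extra detail (the computation of the $\alpha_u$ and the explicit branch-swapping unitary justifying the interchange) only spells out steps the paper leaves implicit.
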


\begin{proof}
By Lemma \ref{aslim_lem} and Proposition \ref{stable_subspc_prop} it is easy to see that the condition $\Sl \notin C_{0\cdot}(\ell^2(V^0))$ holds if and only if we have either $\prod_{j=1}^\infty \lambda_j > 0$ or $\prod_{j=1}^\infty \lambda_{j'} > 0$. By interchanging $\lambda_{j'}$ and $\lambda_j$ for every $j\in\N$, if necessary, we can assume that the first inequality is satisfied. Then the sequence $\left\{\prod_{j=1}^k \frac{\lambda_{j'}}{\lambda_j}\colon k\in\N\right\}$ is obviously bounded. Applying the previous proposition, we get the similarity.
\end{proof}

In our last theorem we show that a weighted shift on $\irT^0$ can be cyclic.

\begin{thm}\label{cyclic_br1_no_leaf}
There is a weighted shift $\Sl\in\irB(\ell^2(V^0))$ on $\irT^0$ which is cyclic.
\end{thm}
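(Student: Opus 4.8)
The plan is to reduce, via the similarity theorem just proved, to the cyclicity of an orthogonal sum $W\oplus V$ of a bilateral and a unilateral weighted shift whose spectra are \emph{disjoint}, and then to produce a cyclic vector for that sum by hand.

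First I would fix explicit weights on $\widetilde{\irT}$: put $\lambda_n=c$ for every $n\in\Z$ on the main line and $\lambda_{k'}=d$ for every $k\in\N$ on the branch, where $0<d<c$ and $c^2+d^2\le 1$ (so that $\Sl$ is a contraction; this costs no generality, since cyclicity is invariant under nonzero scaling). Then $\prod_{j=1}^k\frac{\lambda_{j'}}{\lambda_j}=(d/c)^k$ is bounded, so Theorem \ref{sim_br_1_thm}(ii) applies and $\Sl$ is similar to $\widetilde W=W\oplus V$, with $W$ the bilateral shift of constant weight $c$ and $V$ the unilateral shift of weights $w_{k'}=\|g_{k-1}\|/\|g_k\|$. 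Since $\|g_k\|^2=c^{-2k}+d^{-2k}$ and $d<c$, one checks $w_{k'}\to d$, whence $\sigma(W)=\{|z|=c\}$ while $V$ has spectral radius $d$, so $\sigma(V)\subseteq\{|z|\le d\}$ and the two spectra are disjoint. As cyclicity is a similarity invariant (apply Lemma \ref{is_as_forcyclem}(i) to an invertible intertwiner), it suffices to exhibit a cyclic vector for $\widetilde W$.

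Represent $W$ as multiplication by $c\chi$ on $L^2=L^2(\T)$, and let $g:=e_{1'}$, a cyclic vector for the unilateral shift $V$. For $f$ I would take a cyclic vector of the bilateral shift $W$, i.e. $f\in L^2$ with $f\neq0$ a.e. and $\int_\T\log|f|\,dm=-\infty$, and claim $f\oplus g$ is cyclic for $\widetilde W$. Writing $M:=\vee\{\widetilde W^n(f\oplus g)\colon n\in\Z_+\}$, it is enough to show $f\oplus 0\in M$ and $0\oplus g\in M$: indeed then $\vee\{\widetilde W^n(f\oplus 0)\}=L^2\oplus 0$ and $\vee\{\widetilde W^n(0\oplus g)\}=0\oplus\ell^2(\{k'\})$, because $f,g$ are cyclic for $W,V$. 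Both memberships reduce to finding polynomials $p_n$ with prescribed behaviour on the two spectra: for $0\oplus g\in M$ I need $p_n(W)f\to 0$ in $L^2$ together with $p_n(V)g\to g$, and for $f\oplus 0\in M$ the symmetric pair $p_n(W)f\to f$, $p_n(V)g\to 0$. Since $\sigma(V)\subseteq\{|z|\le d\}$ sits strictly inside $\{|z|=c\}$, fix $d<d'<c$; putting $\rho=d'/c<1$ and substituting $q_n(\zeta)=p_n(c\zeta)$, the first requirement becomes: produce polynomials $q_n$ with $q_n\to 1$ uniformly on $\{|\zeta|\le\rho\}$ (which forces $p_n(V)g\to g$ through the holomorphic functional calculus on the disk) and $\int_\T|q_n|^2|f|^2\,dm\to 0$ (which is $p_n(W)f\to 0$); the second requirement is symmetric, with $q_n\to 0$ uniformly on $\{|\zeta|\le\rho\}$ and $\int_\T|q_n-1|^2|f|^2\,dm\to 0$.

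The hard part is exactly this simultaneous approximation. The set $\{|\zeta|\le\rho\}\cup\T$ has a hole (the annulus $\rho<|\zeta|<1$), so no \emph{uniform} polynomial approximation is available; by the maximum principle, a polynomial close to $1$ on the whole unit circle is close to $1$ on all of $\{|\zeta|\le1\}$, which would clash with the required behaviour on the inner disk. The point is that I do not need smallness of $q_n$ uniformly on $\T$, only in the \emph{thin} weighted norm $\int_\T|q_n|^2|f|^2\,dm$, and the condition $\int_\T\log|f|\,dm=-\infty$ is precisely what allows the ``large'' values of $q_n$ on the circle to be confined to the set where $|f|$ is negligible, while $q_n$ stays near the prescribed constant on the inner disk. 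Building these $q_n$ (for instance through outer functions manufactured from $\log|f|$) is the technical core, and it is where the disjointness of $\sigma(W)$ and $\sigma(V)$ is essential: when the spectra touch, as for $S\oplus S^+$, the annular gap closes ($\rho=1$), the decoupling breaks down, and that sum has indeed no cyclic vector.
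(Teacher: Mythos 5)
Your reduction to the similarity theorem is fine, and the skeleton (show $f\oplus 0$ and $0\oplus g$ both lie in $M$) is the right one, but the proof has a genuine gap exactly where you yourself locate ``the technical core'': you never construct the polynomials $q_n$ with $q_n\to 1$ uniformly on $\{|z|\le\rho\}$ and $\int_\T|q_n|^2|f|^2\,dm\to 0$. This is not a routine verification. The Szeg\H{o} theorem, and the outer-function constructions you allude to (take $h$ outer with $\log|h|=\min(\log(1/|f|),t)-s$ and tune $t$ so that $h(0)=1$), control $q_n$ at the \emph{single} point $0$ only: since the Poisson kernel at a point $z$ with $|z|=\rho$ ranges over $[\tfrac{1-\rho}{1+\rho},\tfrac{1+\rho}{1-\rho}]$, a function whose boundary modulus is forced to be small off a thin set and enormous on it will in general oscillate by a factor $\exp(\pm\tfrac{2\rho}{1-\rho}s)$ over the disc $\{|z|\le\rho\}$ even while its value at $0$ is pinned to $1$. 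So the named device does not deliver the uniform convergence you need, and no substitute is offered. Whether the required simultaneous approximation even holds for \emph{every} cyclic $f$ is itself a nontrivial question (it amounts to a splitting statement for $P^2(\mu)$ with $\mu$ supported on two concentric circles), and the theorem only asks for \emph{one} cyclic weighted shift, so resting the whole argument on this unproved claim is a disproportionate and unclosed commitment.

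It is worth seeing why the paper's proof avoids all of this. There one does \emph{not} take constant weights on the main line; instead the weights $\lambda_n$ are chosen so that the bilateral summand of $\widetilde{W}$ is \emph{hypercyclic} (Salas' criterion) while the unilateral summand is a contraction. If $f$ is a hypercyclic vector for the bilateral part, then for any $e\in\irE$ one picks $j_k$ with $\widetilde{W}^{j_k}f$ close to $ke$, so that $\tfrac1k\widetilde{W}^{j_k}(f\oplus e_{1'})\to e\oplus 0$ because the unilateral coordinate is crushed by the factor $\tfrac1k$; subtracting $\widetilde{W}^nf\oplus 0$ from $\widetilde{W}^n(f\oplus e_{1'})$ then yields $0\oplus V^ne_{1'}$, and these span the branch. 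No polynomial approximation on disjoint spectra is needed. Your choice $\lambda_n\equiv c$ makes the bilateral summand a scalar multiple of a unitary, hence never hypercyclic, which is precisely what forces you into the hard approximation problem. If you want to salvage your write-up with minimal change, replace the constant weights by a bounded hypercyclic weight sequence on $\Z$ (keeping $\lambda_{k'}$ small enough that $\prod_{j=1}^k\lambda_{j'}/\lambda_j$ stays bounded and the unilateral summand is contractive) and run the normalized-orbit argument above; otherwise you must actually prove the approximation claim, which is a substantial piece of function theory in its own right.
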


\begin{proof}
There exists a hypercyclic weighted bilateral shift operator $R\in\irB(\irE), R e_j = \lambda_{j+1} e_{j+1}, \lambda_j>0\; (j\in\Z)$ (see \cite{hypercyclic_bil}). 
Let us define $\{\lambda_{k'}\}_{k=1}^\infty$ recursively such that $\lambda_{1'} = \lambda_1$, $0<\lambda_{k'}\leq\lambda_k$ and $\|g_{k-1}\|\leq\|g_k\|$ holds for every $k\in\N\setminus\{1\}$, where $g_k$ is as in \eqref{eq2}.
Clearly, this can be done.
An application of Proposition \ref{sim_br_1_prop} gives that the weighted shift $\Sl\in\irB(\ell^2(V^0))$ on $\irT^0$ is similar to the operator $\Ww\in\irB(\ell^2(V^0))$.
By definition, the unilateral summand of $\Ww$ is a contraction, but the bilateral summand $R$ is not (otherwise it would not be hypercyclic).
It is enough to show that $\Ww$ is cyclic, in fact, we will show that $f\oplus e_{1'} \in \irE\oplus \irE'$ is a cyclic vector of $\Ww$ whenever $f\in\irE$ is a hypercyclic vector of $R$.

First, let us take an arbitrary vector $e\in\irE$. An easy observation shows that there is a sequence $\{\Ww^{j_k}(f\oplus 0)\}_{k=1}^\infty$ such that $j_k\in\N$ and $\frac{1}{k}\Ww^{j_k}(f\oplus 0) \to e\oplus 0$. Since the unilateral summand is a contraction, we get $\frac{1}{k}\Ww^{j_k}(f\oplus e_{1'}) \to e\oplus 0$. This implies $\irE\subseteq \vee\{ \Ww^k(f\oplus e_{1'}) \colon k\in\N_0\}$.

Second, we fix a number $n\in\N$. Our aim is to prove that $0\oplus e_{n'} \in \vee\{ \Ww^k(f\oplus e_{1'}) \colon k\in\N_0\}$. 
Since $\Ww^{n-1} (f\oplus 0) \in \irE$, we have $\Ww^{n-1} (f\oplus 0) \in \vee\{ \Ww^k(f\oplus e_{1'}) \colon k\in\N_0\}$.
Therefore we obtain $\Ww^{n-1}(f\oplus e_{1'}) - \Ww^{n-1} (f\oplus 0) = \Ww^{n-1} (0\oplus e_{1'}) \in \vee\{ \Ww^k(f\oplus e_{1'}) \colon k\in\N_0\}$.
Since $\Ww^{n-1} (0\oplus e_{1'})$ is a non-zero scalar multiple of $0\oplus e_{n'}$, our proof is complete.
\end{proof}

Finally, let $R\in\irB(\irE), R e_j = \lambda_{j+1} e_{j+1}, 0<\lambda_j\leq 1\; (j\in\Z)$ such that $R$ has no cyclic vectors (see e.g. \cite{non-cyclic_bil_shift}). 
We set $\lambda_{k'} = \lambda_k$ $(k\in\N)$. 
Then $\Sl\in\irB(\ell^2(V^0))$ has no cylcic vectors, since it is similar to $\Ww$ and the bilateral summand of $\Ww$ is non-cyclic.
This provides a weighted shift $\Sl\in\irB(\ell^2(V^0))$ on $\irT^0$ such that its adjoint $\Sl^*$ has no cyclic vectors.


\section*{Acknowledgement} 
The author expresses his sincere thank to the anonymous referee for his/her extremely thorough review and comments on the first version of the paper which helped the author substantially improve the quality of the presentation.
The author is also grateful to professor L\'aszl\'o K\'erchy for his useful suggestions.

\end{document}